\documentclass[12pt,twoside]{amsart}
\usepackage{amssymb,amsmath,amsthm, amscd, enumerate, mathrsfs, upgreek}
\usepackage{graphicx, hhline, tikz}
\usepackage[colorlinks=true,pagebackref,hyperindex]{hyperref}
\usepackage[all]{xy}

\usepackage{url}

\usepackage{color}
\usepackage[backrefs, alphabetic, initials]{amsrefs}
\usepackage{color}  
\usepackage{latexsym, bm}
\usepackage[T1]{fontenc} 
\usepackage{fancyhdr}
\usepackage{enumerate}
\title[Strictly nef anti-canonical divisors]
{On a numerical criterion for Fano fourfolds}

\author{Haidong Liu}

\date{\today, version 0.03}
\subjclass[2010]{Primary 14J45; Secondary 14J35, 14E30}
\keywords{strictly nef anti-canonical divisors, 
Fano fourfolds, Campana--Peternell's conjecture, Serrano's conjecture}

\address{Sun Yat-sen University, Department of mathematics, Guangzhou, 510275, China}
\email{liuhd35@mail.sysu.edu.cn, jiuguiaqi@gmail.com}



\DeclareMathOperator{\strutt}{\mathcal{O}}




\newtheorem{thm}{Theorem}[section]
\newtheorem{lem}[thm]{Lemma}
\newtheorem{prop}[thm]{Proposition}
\newtheorem{conj}[thm]{Conjecture}
\newtheorem{cor}[thm]{Corollary}
\newtheorem{ques}[thm]{Question}

\theoremstyle{definition}

\newtheorem{defn}[thm]{Definition}
\newtheorem{rem}[thm]{Remark}
\newtheorem*{ack}{Acknowledgments}  

\newtheorem{step}{Step}

\newtheorem*{claim}{Claim}

\makeatletter
    
    \@addtoreset{equation}{section}
\makeatother
\setlength{\topmargin}{-1cm}
\setlength{\oddsidemargin }{-1pt}
\setlength{\evensidemargin }{-1pt}
\setlength{\textwidth}{460pt}
\setlength{\textheight}{24cm}
\setcounter{tocdepth}{1}
\begin{document}

\begin{abstract}
In this paper, we prove a special case of Campana--Peternell's conjecture in dimension 4. Specifically, we show that a projective smooth fourfold $X$ with $c^2_1(X)\cdot c_2(X)\neq 0$ 
and strictly nef anti-canonical divisor $-K_X$ is a Fano fourfold.
To this aim, we completely solve
the non-vanishing conjecture for strictly nef anti-canonical divisors in dimension 4.
\end{abstract}

\maketitle 

\tableofcontents

\section{Introduction}\label{sec1}

In \cite{cp}, Campana and Peternell proposed the following conjecture,
which provides a numerical criterion for Fano manifolds.

\begin{conj}[\cite{cp}*{Problem 11.4}]\label{conj.cp}
Let $X$ be a projective manifold 
such that the anti-canonical divisor $-K_X$ is strictly nef,
i.e., $-K_X\cdot C>0$ for any curve $C$ on $X$.
Then, $X$ is a Fano manifold,  i.e.,  $-K_X$ is ample.
\end{conj}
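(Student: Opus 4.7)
The plan is to reduce Conjecture \ref{conj.cp} to showing that $-K_X$ is big. Once $-K_X$ is both nef and big, the base-point free theorem produces $m \gg 0$ with $|-mK_X|$ base-point free, yielding a morphism $\phi\colon X \to Z$ that contracts an irreducible curve $C$ precisely when $(-K_X)\cdot C = 0$; strict nefness rules this out, so $\phi$ is finite and $-K_X$ is ample. Hence the entire task becomes proving $(-K_X)^n > 0$ for $n = \dim X$.

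The strategy I would follow is an inductive attack via the Minimal Model Program. Strict nefness forces $K_X \cdot C < 0$ for every curve, so $K_X$ is not pseudo-effective; by Boucksom--Demailly--Paun--Peternell $X$ is uniruled, and via Cao--H\"oring one expects $X$ to be rationally connected. By the cone theorem every extremal ray is $K_X$-negative, and in fact $\overline{NE}(X)$ lies inside the open $K_X$-negative half-space, so the Mori cone is rational polyhedral and finitely generated. Pick an extremal contraction $\pi\colon X \to Y$. If $\pi$ is of fiber type, a general fiber $F$ has $(-K_X)|_F = -K_F$ strictly nef and lower-dimensional, hence Fano by induction on $\dim X$; one then studies $Y$ together with the fiberwise positivity, aiming to combine them into $(-K_X)^n > 0$. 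If $\pi$ is divisorial, one descends the problem to $Y$, which has strictly smaller Picard rank and ideally inherits a workable positivity for $-K_Y$, allowing induction on $\rho(X)$.

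The main obstacle is precisely the descent of strict nefness under these contractions. Along a Mori fiber contraction, the base typically inherits only nefness of $-K_Y$ rather than strict nefness, and the multiplicative behavior of intersection numbers in a fibration does not directly promote $(-K_F)^{\dim F} > 0$ plus base positivity to $(-K_X)^n > 0$. Along a divisorial contraction, $Y$ may fail to be smooth, and the relation $K_X = \pi^* K_Y + aE$ does not automatically transfer strict nefness of $-K_X$ to $-K_Y$ once one tests against curves meeting the exceptional divisor $E$. In dimension $\ge 4$, small contractions force flips, for which preservation of strict nefness is delicate. Even granted a reduction to the Picard-rank-one case, establishing $(-K_X)^n > 0$ from strict nefness alone appears to require genuinely new numerical input---a Miyaoka-type positivity such as $c_2(X)\cdot(-K_X)^{n-2} \ge 0$ combined with asymptotic Riemann--Roch, or Kawamata-style sub-adjunction along the null locus of $-K_X$---which is exactly the juncture at which supplementary Chern-class hypotheses (such as the fourfold condition $c_1^2(X)\cdot c_2(X)\neq 0$ treated later in this paper) become indispensable.
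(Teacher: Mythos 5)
The statement you are trying to prove is Conjecture~\ref{conj.cp}, which is an \emph{open conjecture}: the paper does not prove it, and only establishes the special case of smooth fourfolds with $c_1^2(X)\cdot c_2(X)\neq 0$ (Theorem~\ref{thm.main}), by a route quite different from yours. Your proposal is a strategy sketch rather than a proof, and you candidly say so at the end; but beyond the acknowledged incompleteness there is a concrete error early on. You assert that ``$\overline{NE}(X)$ lies inside the open $K_X$-negative half-space, so the Mori cone is rational polyhedral.'' Strict nefness only gives $-K_X\cdot C>0$ for irreducible curves $C$; it says nothing about limit classes in $\overline{NE}(X)$, which may accumulate on the hyperplane $K_X^{\perp}$. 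Indeed, if the Mori cone really were contained in the open negative half-space, it would be finitely generated by the cone theorem and $-K_X$ would be ample by Kleiman's criterion --- i.e.\ your premise is equivalent to the conclusion. The whole difficulty of the conjecture is exactly the possible existence of a face of $\overline{NE}(X)$ on which $K_X$ vanishes but which contains no curve class. Consequently the MMP machinery you invoke (finitely many extremal contractions, induction on $\rho$) is not available in the form you need, and the fiber-type/divisorial/flip cases are, as you note, all left open.

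For contrast, the paper's partial result does not run the MMP at all. Following Serrano, it splits the problem into (i) the ``weak'' statement that $V-mK_X$ is ample for $m\gg1$ for every prime divisor $V$ (Conjecture~\ref{conj.weak.cp}), proved by restricting to $V$, passing to a relative canonical model, and deriving contradictions from vanishing intersection numbers (Corollary~\ref{cor.cal}, Proposition~\ref{prop.ind}, Theorem~\ref{thm.lv}), and (ii) a non-vanishing statement $\kappa(-K_X)\geq 0$ (Theorem~\ref{thm.nonvanishing}), proved via Hirzebruch--Riemann--Roch together with Ou's positivity $c_1^{n-2}(X)\cdot c_2(X)\geq 0$, the hard Lefschetz theorem with multiplier ideals, and Siu decomposition of positive currents. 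Your closing diagnosis --- that the missing ingredient is a numerical input of Miyaoka type combined with asymptotic Riemann--Roch --- is accurate and is precisely where the hypothesis $c_1^2(X)\cdot c_2(X)\neq 0$ enters the paper; but identifying the right obstacle is not the same as overcoming it, and the general conjecture remains unproved by your argument.
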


By \cite{loy}*{Theorem 1.2}, a projective manifold  with  strictly nef anti-canonical divisor
is rationally connected. 
Therefore, Conjecture \ref{conj.cp} is equivalent to

\begin{conj}\label{conj.cp.eq}
Let $X$ be a projective rationally connected manifold 
such that the anti-canonical divisor $-K_X$ is strictly nef.
Then,  $-K_X$ is ample.
\end{conj}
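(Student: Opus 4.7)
The equivalence of Conjectures~\ref{conj.cp} and~\ref{conj.cp.eq} is essentially just a citation: by \cite{loy}*{Theorem 1.2}, a projective manifold with strictly nef anti-canonical divisor is automatically rationally connected. Thus the hypothesis of Conjecture~\ref{conj.cp.eq} is no stronger than that of Conjecture~\ref{conj.cp}, giving one direction for free; conversely Conjecture~\ref{conj.cp.eq} combined with \cite{loy} recovers Conjecture~\ref{conj.cp}. Nothing further is required at this point.

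If instead the task is to attack Conjecture~\ref{conj.cp.eq} itself — in particular, the fourfold case with $c_1^2(X)\cdot c_2(X)\neq 0$ announced in the abstract — the natural plan is three-stage. First, show $-K_X$ is big, i.e.\ $(-K_X)^4>0$. Second, invoke Kawamata's base-point-free theorem to conclude that $-K_X$ is semi-ample, and let $\varphi\colon X\to Y$ be the resulting morphism. Third, observe that any curve $C$ contracted by $\varphi$ would satisfy $(-K_X)\cdot C=0$, contradicting strict nefness; hence $\varphi$ is finite and $-K_X$ is ample.

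Stages two and three are routine. The technical heart is Stage one. Since $X$ is rationally connected, $\chi(X,\mathcal O_X)=1$, and Hirzebruch--Riemann--Roch expresses $\chi(X,-mK_X)$ as a polynomial in $m$ whose leading coefficient is $(-K_X)^4/24$. I would argue by contradiction: assuming $-K_X$ is strictly nef but not ample, Stages two and three force it to be non-big, so $(-K_X)^4=0$. Combining the Riemann--Roch expressions for $\chi(\mathcal O_X)$ and for small twists of $-K_X$ with Miyaoka-type pseudo-effectivity statements for $c_2(X)$ on manifolds with nef anticanonical class (in the spirit of Miyaoka's generic semi-positivity and its refinements by Ou and others) should produce numerical identities forcing $c_1^2\cdot c_2 = 0$, contradicting the hypothesis.

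The main obstacle I foresee is controlling the intermediate numerical dimensions $1\le \nu(-K_X)\le 3$: in each of these cases the Iitaka/reduction fibration of $-K_X$ must be analyzed, using the rational connectedness of $X$ and classification results for the general fiber (which itself carries a nef, non-big anticanonical), in order to rule out configurations compatible with $c_1^2\cdot c_2\neq 0$. I expect this Chern-class bookkeeping, case by case on $\nu(-K_X)$, to be the delicate part of the argument.
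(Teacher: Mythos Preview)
Your first paragraph is correct and is precisely what the paper does: Conjecture~\ref{conj.cp.eq} is stated as a conjecture, and its equivalence with Conjecture~\ref{conj.cp} is just the citation to \cite{loy}*{Theorem 1.2}. Nothing further is required for this statement.

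Your sketch of the fourfold case, however, misidentifies where $c_1^2(X)\cdot c_2(X)\neq 0$ enters and misses half of the paper's argument. Riemann--Roch plus Ou's pseudo-effectivity does not \emph{force} $c_1^2\cdot c_2=0$; rather, together with the hypothesis it gives $\chi(X,-mK_X)\geq 1$ as \emph{input} (Corollary~\ref{cor.rr}), and the real work is converting $\chi>0$ into $h^0>0$. The paper does this not via Iitaka fibrations but via hyperplane sections and Kawamata--Viehweg vanishing for $\nu=3$ (Theorem~\ref{thm.nu=3}), and via singular metrics, Siu decomposition, and the hard Lefschetz theorem of \cite{dps} for $\nu\leq 2$ (Theorem~\ref{thm.nu=2}).

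More seriously, you omit the weak Campana--Peternell step (Theorem~\ref{thm.lv}): one first proves that $V-mK_X$ is ample for \emph{every} prime divisor $V$. This is needed both to pass from $\kappa(-K_X)\geq 0$ to ampleness (Corollary~\ref{cor.main}) and, through Corollary~\ref{cor.pseff}, as an essential ingredient in the non-vanishing argument for $\nu\leq 2$. The hypothesis $c_1^2\cdot c_2\neq 0$ is in fact used at exactly one point, in the last paragraph of the proof of Theorem~\ref{thm.lv}: when the argument reduces to a prime Calabi--Yau divisor $V\sim -K_X$, the hypothesis guarantees $\chi(V,mL)>0$ on that threefold and thereby sidesteps the open case of Serrano's conjecture for Calabi--Yau threefolds. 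Your plan does not anticipate this structure.
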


Then, in this paper, Campana--Peternell's conjecture stands for Conjecture \ref{conj.cp} or \ref{conj.cp.eq},
depending on whether the rationally connectedness should be emphasized.
This conjecture has been confirmed in dimension 2 by Maeda \cite{maeda} and in dimension 3
by Serrano \cite{serrano}. In Serrano's work \cite{serrano}, 
Campana--Peternell's conjecture was put into a more general 
framework by viewing as a special case of the so-called Serrano's conjecture,
and then could be treated by induction on the dimension. In this paper, we follow
Serrano's idea (as \cites{ccp, liu-matsumura} did) 
and give an affirmative answer to Campana--Peternell's conjecture in dimension 4 under 
the assumption that $c^2_1(X)\cdot c_2(X)\neq 0$.

\begin{thm}[$=$ Corollary \ref{cor.main} $+$ Theorem \ref{thm.nonvanishing}]\label{thm.main}
Let $X$ be a projective rationally connected smooth fourfold such that $c^2_1(X)\cdot c_2(X)\neq 0$ 
and the anti-canonical divisor $-K_X$ is strictly nef.
Then, $X$ is a Fano fourfold.
\end{thm}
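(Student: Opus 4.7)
The theorem is stated as the combination of a non-vanishing Theorem \ref{thm.nonvanishing} and a Corollary \ref{cor.main}; I will organize the plan the same way, first establishing bigness of $-K_X$ and then upgrading nef-and-big to ample.

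\textbf{Ampleness from bigness.} Assume for the moment that $-K_X$ is nef and big. Since $X$ is smooth (so $(X,0)$ is klt) and $-3K_X = 2(-K_X) - K_X$ is nef and big, the Kawamata--Shokurov Base-Point-Free Theorem gives $m > 0$ such that $|-mK_X|$ is base-point-free. Let $\phi\colon X\to Y$ be the induced morphism; since $\phi^{\ast}\mathcal{O}_Y(1)\cong\mathcal{O}_X(-mK_X)$ and strict nefness forces $-K_X\cdot C > 0$ for every curve $C\subset X$, the morphism $\phi$ contracts no curve and is therefore finite. Hence $-K_X$ is ample.

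\textbf{Bigness via Hirzebruch--Riemann--Roch.} By Lazi\'c--Ou--Yang \cite{loy}, $X$ is rationally connected, so $h^{i}(\mathcal{O}_X) = 0$ for $i \ge 1$ and $\chi(\mathcal{O}_X) = 1$. A direct HRR computation on the smooth fourfold yields
\[
\chi(X,-mK_X) \;=\; \frac{m^{2}(m+1)^{2}}{24}\,(-K_X)^{4} \;+\; \frac{m(m+1)}{24}\,c_1^{2}(X)\cdot c_2(X) \;+\; 1.
\]
Arguing by contradiction, suppose $(-K_X)^{4} = 0$. The hypothesis $c_1^{2}(X)\cdot c_2(X) \neq 0$ then makes $\chi(X,-mK_X)$ a nonzero quadratic polynomial in $m$, so $|\chi(X,-mK_X)|\to\infty$. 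My plan is to leverage this divergence against upper bounds on $h^{i}(X,-mK_X)$ for $i \ge 1$ in order to produce a non-vanishing $H^{0}(X,-mK_X)$ of unbounded growth, and then to promote this to genuine bigness of $-K_X$, contradicting the assumption.

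\textbf{Main obstacle.} The heart of the argument --- and the step I expect to be most difficult --- is controlling the higher cohomology $h^{i}(X,-mK_X)$ in the regime where $-K_X$ is nef but not yet known to be big, so that Kawamata--Viehweg vanishing is not directly applicable. I would attempt: (i) Serre duality combined with non-pseudo-effectivity of $K_X$ (which follows from $-K_X$ being strictly nef, hence nonzero); and (ii) a Koll\'ar-type injectivity/vanishing statement for the nef class $-K_X$, strengthened by the fact that the nef dimension of $-K_X$ equals $4$ (a direct consequence of strict nefness). A secondary, closely linked, obstacle is the passage from unbounded growth of $h^{0}(X,-mK_X)$ to actual bigness of $-K_X$: this presumably requires a case analysis by the numerical dimension $\nu(-K_X) \in \{1,2,3\}$, using the associated Iitaka/nef reduction fibration and rational connectedness to rule out the sub-maximal cases.
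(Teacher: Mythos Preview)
Your reduction ``bigness $\Rightarrow$ ampleness'' via the base-point-free theorem is fine, but it misrepresents the architecture of the paper: the paper never proves bigness directly. The decomposition Corollary~\ref{cor.main} $+$ Theorem~\ref{thm.nonvanishing} is \emph{not} ``bigness then ample''; it is ``non-vanishing $\kappa(-K_X)\geq 0$'' (Theorem~\ref{thm.nonvanishing}) combined with the \emph{weak Campana--Peternell} statement (Theorem~\ref{thm.lv}): for every prime divisor $V$, the divisor $V-mK_X$ is ample for $m\gg 1$. Given an effective $D\in|-m_0K_X|$, applying this to a component of $D$ forces $-K_X$ ample. Theorem~\ref{thm.lv} is the technical core of the paper (reduction to a prime Calabi--Yau divisor via Serrano-type intersection identities, then an inductive descent), and the assumption $c_1^2\cdot c_2\neq 0$ is used only in its last paragraph to avoid the open case of Serrano's conjecture on Calabi--Yau threefolds. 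Your plan omits this theorem entirely.

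This omission also undermines your non-vanishing sketch. For $\nu(-K_X)=3$ the paper's argument is close in spirit to what you suggest (restrict to a very ample hypersurface to kill $h^{\geq 2}$, then use $\chi\geq 1$). For $\nu(-K_X)\leq 2$, however, the paper's control of $h^2$ goes through the hard Lefschetz theorem for pseudoeffective line bundles, multiplier ideal sheaves, a Siu decomposition plus Bedford--Taylor product of the curvature current, Ou's pseudoeffectivity of quotients of $\mathcal{T}_X^{\otimes m}$, and---crucially---Corollary~\ref{cor.pseff}, which itself is a consequence of Theorem~\ref{thm.lv}. So the weak Campana--Peternell theorem is an \emph{input} to the non-vanishing argument, not something you can bypass. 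Your proposed tools (Serre duality, a Koll\'ar-type injectivity, nef dimension $4$) do not obviously yield the needed bound on $h^2(X,-mK_X)$ when $\nu(-K_X)\leq 2$; in particular, injectivity theorems for a nef but non-big class do not give vanishing. Finally, your ``promote unbounded $h^0$ to bigness'' step is the whole problem: quadratic growth of $h^0$ gives at most $\kappa\geq 2$, not $\kappa=4$, and the paper closes this gap precisely via Theorem~\ref{thm.lv}, not by a direct bigness argument. (A minor point: by Ou's inequality $c_1^2\cdot c_2\geq 0$, so under your hypothesis $\chi(X,-mK_X)\to+\infty$; you need not worry about the sign.)
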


Following closely to \cite{liu-matsumura}, the organization of this paper is divided into two parts.
In Section \ref{sec3}, we partially prove the ampleness part of Theorem \ref{thm.main} under the assumption that  $\kappa(X, -K_X)\geq 0$.
By the same reason that 
Serrano's conjecture for Calabi--Yau threefolds has not been completely solved yet,
we have to get around this difficulty by carefully using the strict nefness as in \cite{liu-matsumura}.
Unfortunately, 
we cannot solve the problem completely at this moment.
In this paper, the case $c^2_1(X)\cdot c_2(X)=0$ still remains.

In Section \ref{sec4}, we completely solve
the non-vanishing conjecture for strictly nef anti-canonical divisors (Conjecture \ref{conj.nonvanishing}) in dimension 4.

\begin{thm}[Theorem \ref{thm.nonvanishing}]
Let $X$ be a projective rationally connected smooth fourfold such that 
the anti-canonical divisor $-K_X$ is strictly nef.
Then, we have that $\kappa(X, -K_X)\geq 0$.
\end{thm}

\begin{ack}
The author would like to thank Chen Jiang, Jie Liu, Wenhao Ou and Guolei Zhong for useful discussions and suggestions.
\end{ack}

Throughout this paper,  we work over the complex number field $\mathbb C$.
A {\em scheme} is always assumed to be separated and of finite type over $\mathbb{C}$, 
and a {\em variety} is a reduced and irreducible algebraic scheme. 
We will freely use the notation 
in \cites{demailly-book, fujino-foundations, kollar-mori, laz}.

\section{Preliminaries}\label{sec2}

In this section, we present some preliminary results.

\subsection{Hirzebruch--Riemann--Roch formula}
Let $X$ be a projective smooth fourfold such that $-K_X$ is nef but not big.
Then, since $c_1(X)=-K_X$ and  $K_X^4=0$, we obtain that
\begin{equation}\label{eq.rr}
\chi(X, -mK_X)= \frac{c^2_1(X)\cdot c_2(X)}{24}(m^2+m)+\chi(X, \mathcal O_X)
\end{equation}
from  the Hirzebruch--Riemann--Roch formula.



\begin{lem}\label{lem.rr}
Let $X$ be a projective rationally connected smooth fourfold 
such that $-K_X$ is nef but not big.
Then, $\chi(X, -mK_X)\geq 1$ for any $m\geq 0$.
\end{lem}

\begin{proof}
Since $c_1(X)=-K_X$ is nef, we obtain that $c^{2}_1(X)\cdot c_2(X)=c_2(X)\cdot (-K_X)^2\geq 0$ by \cite{ou}*{Corollary 1.5}.
Since $X$ is rationally connected, we obtain that $h^i(X, \mathcal O_X)=0$ for $i\geq 1$
by \cite{kollar-book}*{Chapter IV.3, Corollary 3.8}.
Then, it follows from \eqref{eq.rr} that
$\chi(X, -mK_X)\geq \chi(X, \mathcal O_X)=h^0(X, \mathcal O_X)=1$ for any $m\geq 0$.
\end{proof}

In the proof of Lemma \ref{lem.rr}, Ou's result \cite{ou}*{Corollary 1.5} 
plays an important role.
This result is a direct corollary of the following theorem, 
which will be crucial in the proof of our non-vanishing result for the case $\nu(X, -K_X)\leq 2$.
This theorem can be essentially reduced to \cite{ou}*{Theorem 1.4},
 since reflexive product preserves semistability on manifolds 
(see \cite{cp11}*{Section 5}). 
For a more general result and a detailed proof, we refer to \cite{lmptx}*{Theorem 4.1}.

\begin{thm}[\cite{ou}*{Theorem 1.4}]\label{thm.pseff}
Let $X$ be a projective manifold and
$(\mathcal T_X)^{\otimes m}\to \mathcal Q$
be a torsion free coherent quotient for some $m\geq 1$. 
If $-K_X$ is nef, then $c_1(\mathcal Q)$ is pseudoeffective.
\end{thm}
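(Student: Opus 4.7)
The plan is to derive Theorem \ref{thm.pseff} from \cite{ou}*{Theorem 1.4} in its original (more restrictive) form by combining the Harder--Narasimhan formalism with respect to movable classes with the preservation of slope-semistability under reflexive tensor products. By the Boucksom--Demailly--Paun--Peternell duality between the pseudoeffective cone of divisors and the movable cone of curves, a divisor class is pseudoeffective exactly when it pairs nonnegatively with every movable curve class, so it suffices to show that $c_1(\mathcal Q)\cdot\alpha\geq 0$ for every movable class $\alpha$. Fixing such an $\alpha$ (for instance $\alpha=H_1\cdots H_{n-1}$ with the $H_i$ ample) and working with slope-semistability relative to $\alpha$, applying the input case of \cite{ou}*{Theorem 1.4} to every torsion-free quotient of $\mathcal T_X$ yields the bound $\mu^{\min}_\alpha(\mathcal T_X)\geq 0$.

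The next step is to promote this bound to the full tensor power $(\mathcal T_X)^{\otimes m}$. The key ingredient, recalled in \cite{cp11}*{Section 5}, is that on a smooth variety the reflexive tensor product of $\alpha$-semistable sheaves is again $\alpha$-semistable, with slope equal to the sum of the factor slopes. Because $X$ is smooth, $(\mathcal T_X)^{\otimes m}$ agrees with its reflexive hull outside a codimension-two locus, so Chern classes of the Harder--Narasimhan graded pieces are unaffected by passage to reflexive hulls. The graded pieces of the Harder--Narasimhan filtration of $(\mathcal T_X)^{\otimes m}$ are then (up to reflexive correction) tensor products of graded pieces of the Harder--Narasimhan filtration of $\mathcal T_X$, and so have slope at least $m\cdot\mu^{\min}_\alpha(\mathcal T_X)\geq 0$. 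This gives $\mu^{\min}_\alpha((\mathcal T_X)^{\otimes m})\geq 0$.

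Finally, for any torsion-free coherent quotient $\mathcal Q$ of $(\mathcal T_X)^{\otimes m}$, the Harder--Narasimhan graded pieces of $\mathcal Q$ are torsion-free quotients of those of $(\mathcal T_X)^{\otimes m}$ and therefore inherit nonnegative slope, so $c_1(\mathcal Q)\cdot\alpha\geq 0$. As $\alpha$ was arbitrary, $c_1(\mathcal Q)$ is pseudoeffective. The main obstacle, and the reason the author labels this as ``essentially'' a reduction to \cite{ou}*{Theorem 1.4}, is the technical bookkeeping around reflexive versus ordinary tensor products: one must verify carefully that the semistability-preservation statement of \cite{cp11}*{Section 5} transports to the non-reflexive sheaf $(\mathcal T_X)^{\otimes m}$, and that the reflexive hulls introduced during the argument leave first Chern classes unchanged. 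Smoothness of $X$ confines these discrepancies to codimension at least two and resolves the issue.
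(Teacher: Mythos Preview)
Your approach is correct and is precisely what the paper sketches: the paper gives no proof beyond the one-line remark that the statement ``essentially can be reduced to \cite{ou}*{Theorem 1.4}, since reflexive product preserves semistability on manifolds (see \cite{cp11}*{Section 5}),'' and you have filled in exactly those details via the BDPP duality and the Harder--Narasimhan formalism.

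One small imprecision worth fixing: in your final paragraph, the Harder--Narasimhan graded pieces of $\mathcal Q$ are \emph{not} in general torsion-free quotients of the Harder--Narasimhan graded pieces of $(\mathcal T_X)^{\otimes m}$. The correct (and simpler) argument is that any torsion-free quotient $\mathcal Q$ satisfies $\mu_\alpha(\mathcal Q)\geq \mu^{\min}_\alpha\bigl((\mathcal T_X)^{\otimes m}\bigr)\geq 0$ directly from the definition of $\mu^{\min}$ as the infimum of slopes of torsion-free quotients; dividing out the positive rank gives $c_1(\mathcal Q)\cdot\alpha\geq 0$ immediately. This does not affect the validity of your reduction.
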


\subsection{Results on Campana--Peternell's conjecture}
As stated at the beginning of this paper, Campana--Peternell's conjecture 
can be viewed as a special case of Serrano's conjecture, which 
can be naturally generalized to log pairs
(see \cite{han-liu}*{Question 3.5} 
and \cite{liu-matsumura}*{Conjecture 1.5}). 
For manifolds with strictly nef anti-canonical divisors, 
a special case of the log version of Serrano's conjecture can be stated as follows:

\begin{conj}\label{conj.weak.cp}
Let $X$ be a projective rationally connected manifold such that
the anti-canonical divisor $-K_X$ is strictly nef.
Let $V$ be a nonzero effective divisor on $X$. 
Then, $V-mK_X$ is ample for $m\gg 1$.
\end{conj}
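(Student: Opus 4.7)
The plan is to verify the Nakai--Moishezon ampleness criterion for $D_m := V - mK_X$ once $m$ is taken sufficiently large. For any subvariety $Z \subset X$ of positive dimension $d$, one has
\[
D_m^d \cdot Z \;=\; \sum_{i=0}^{d} \binom{d}{i}\, m^i\, V^{d-i} \cdot (-K_X)^i \cdot Z,
\]
a polynomial in $m$ of degree at most $d$. We must show that for each such $Z$ the highest-degree nonvanishing coefficient is positive, with sufficient uniformity to produce a single threshold $m_0$ beyond which $D_m$ is ample.

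I would first dispose of the case where $-K_X$ is big and nef. There Kawamata's base-point free theorem makes $-K_X$ semiample, defining a morphism $\varphi : X \to Y$; strict nefness of $-K_X$ forces every fiber of $\varphi$ to be zero-dimensional, so $\varphi$ is finite and $-K_X$ itself is already ample. The conjecture follows at once since $V$ is effective.

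In the remaining case $\nu(-K_X) < \dim X$ the leading coefficient $(-K_X)^d \cdot Z$ can vanish, and the positivity of $V$ must take over. I would proceed by induction on $\dim X$, the base cases $\dim X \leq 3$ being a consequence of the theorems of Maeda and Serrano (which upgrade strict nefness of $-K_X$ to ampleness outright). For $d=1$, strict nefness of $-K_X$ gives $(-K_X) \cdot C > 0$, and since $V$ is a \emph{prime} divisor the only curves with $V \cdot C < 0$ are those contained in $V$; on such curves one wants a uniform lower bound for $(-K_X) \cdot C$ relative to $V \cdot C$, which I would extract from boundedness of extremal rays on a desingularization of $V$. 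For the inductive step, I would restrict to $V$ via adjunction $K_V = (K_X + V)|_V$, obtaining
\[
D_m|_V \;=\; (m+1)\, V|_V - m\, K_V,
\]
and try to apply the conjecture inductively on $V$ after running a suitable MMP.

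The main obstacle is that neither $V|_V$ nor $-K_V$ is a priori nef, so the induction does not close cleanly, and the restriction step requires substantial preparation. A natural workaround is to use Theorems~\ref{thm.ou} and~\ref{thm.pseff} together with Corollary~\ref{cor.rr} to produce an effective representative $D' \sim_{\mathbb Q} -\lambda K_X$ for some $\lambda > 0$, and then to run the Serrano-type argument on the components of $V + D'$ rather than on $V$ alone, so as to descend to a lower-dimensional situation where the inductive hypothesis already yields the needed positivity. In the dimension-four setting of this paper, such a non-vanishing input is exactly what the hypothesis $c^2_1(X) \cdot c_2(X) \neq 0$ guarantees via \eqref{eq.rr}; the case $c^2_1(X) \cdot c_2(X) = 0$, in which no effective representative of $-\lambda K_X$ is forced to exist, is precisely the remaining gap flagged by the author and is where I expect the sharpest difficulty.
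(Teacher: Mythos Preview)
First, the statement you address is a \emph{conjecture}; the paper does not prove it in general but only the dimension-$4$ case under $c_1^2(X)\cdot c_2(X)\neq 0$ (Theorem~\ref{thm.lv}). Read as a sketch toward that special case, your Nakai--Moishezon approach has a genuine gap: the uniformity of the threshold $m_0$ over \emph{all} subvarieties $Z$ is the entire difficulty, and you give no mechanism for it. Your curve case amounts to showing $D_m$ is strictly nef for $m\gg 1$, which is exactly Lemma~\ref{lem.firststep} (the Serrano cone argument) and is the easy part. For $d\geq 2$ you propose induction by restricting to $V$, but as you concede, neither $V|_V$ nor $-K_V$ is nef, and $-K_V$ is certainly not strictly nef, so Conjecture~\ref{conj.weak.cp} does not apply on $V$. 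The correct inductive input is Conjecture~\ref{conj.big} for almost strictly nef divisors (see Proposition~\ref{prop.ind}), and that conjecture is \emph{open} in dimension $3$---precisely the obstruction the whole paper is organized around. So the induction cannot close as written.

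The paper's route in Theorem~\ref{thm.lv} is quite different and does not attempt Nakai--Moishezon at all. By Lemma~\ref{lem.firststep}, once $D_m$ is strictly nef, ampleness is equivalent to bigness via the basepoint-free theorem; assuming $D_m$ is never big, Corollary~\ref{cor.cal} forces $V^i\cdot K_X^{4-i}=0$ for all $i$. A long intersection-theoretic computation on the relative canonical model $T$ of the normalization of $V$ (Steps~1--2) then forces $V$ to be a normal prime Calabi--Yau divisor, and a cohomological argument (Step~3) pins down $V\sim -K_X$. Only at the very last line is $c_1^2(X)\cdot c_2(X)\neq 0$ used---not to produce an effective divisor in $|-\lambda K_X|$ on $X$ as your workaround suggests, but to compute $\chi(V,mL)>0$ on the Calabi--Yau threefold $V$ and invoke Oguiso's theorem there, contradicting $K_X^4=0$. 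Your final paragraph conflates this weak-Campana--Peternell argument (Section~\ref{sec3}) with the separate non-vanishing problem (Section~\ref{sec4}); the paper keeps them apart, and Corollary~\ref{cor.rr} alone does not produce a section of $-K_X$.
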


It is obvious that Campana--Peternell's conjecture is equivalent
to Conjecture \ref{conj.weak.cp}
plus the following non-vanishing conjecture for strictly nef anti-canonical divisors 
(Conjecture \ref{conj.cp.eq} $\Leftrightarrow$ Conjecture \ref{conj.weak.cp} $+$ Conjecture \ref{conj.nonvanishing}).

\begin{conj}[Non-vanishing conjecture for strictly nef anti-canonical divisors]\label{conj.nonvanishing}
Let $X$ be a projective rationally connected manifold such that
the anti-canonical divisor $-K_X$ is strictly nef.
Then, we have that $\kappa (X, -K_X)\geq 0$.
\end{conj}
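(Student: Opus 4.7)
The plan is to prove the conjecture in dimension $4$ under the extra hypothesis $c^2_1(X)\cdot c_2(X)\neq 0$, which is the setting of Theorem \ref{thm.main}; the general case, and in particular $c^2_1\cdot c_2=0$, appears beyond reach without a new structure theorem of the kind the introduction alludes to. The backbone is the Hirzebruch--Riemann--Roch formula \eqref{eq.rr} combined with Ou's positivity result (Theorem \ref{thm.ou}).

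First I would dispose of the big case: if $-K_X$ is nef and big, Kawamata's base-point-free theorem makes $-K_X$ semi-ample, and strict nefness forces the associated morphism to be finite and birational, hence an isomorphism, so $-K_X$ is already ample and $\kappa(-K_X)=4$. We may therefore assume $-K_X$ is nef but not big, which gives $K_X^4=0$ and reduces Riemann--Roch to \eqref{eq.rr}. Under rational connectedness one has $\chi(\mathcal O_X)=1$, and by Theorem \ref{thm.ou} together with $c^2_1\cdot c_2\neq 0$ we have $c^2_1\cdot c_2>0$, so $\chi(X,-mK_X)$ grows quadratically in $m$. Hence the non-vanishing $h^0(X,-mK_X)>0$ reduces to showing that $\sum_{i\geq 1}h^i(X,-mK_X)=o(m^2)$. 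Serre duality gives $h^4(X,-mK_X)=h^0(X,(m+1)K_X)$, which vanishes at once because strict nefness of $-K_X$ prevents $(m+1)K_X$ from being effective for any $m\geq 0$. For $h^1,h^2,h^3$ I would split on the numerical dimension $\nu(-K_X)\in\{0,1,2,3\}$, using Theorem \ref{thm.pseff} applied to quotients of tensor powers of $\mathcal T_X$ to extract a fibration or foliation structure in each stratum; rational connectedness (so $h^i(\mathcal O_X)=0$ for $i\geq 1$) combined with relative vanishing on the base of such a fibration should then give the required sub-quadratic bounds.

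The main obstacle is precisely this cohomology bound when $\nu(-K_X)=2$: in that range neither bigness nor a clean Iitaka-type fibration onto a curve is available, so controlling $h^1$ and $h^2$ has to rely on a delicate, case-by-case use of the pseudo-effectivity machinery of Theorem \ref{thm.pseff}. The orthogonal case $c^2_1\cdot c_2=0$ lies outside this approach altogether, since Riemann--Roch only yields $\chi\equiv 1$ and provides no quadratic growth to drive the argument; attacking it would plausibly require the structure theory for fourfolds with $c^2_1\cdot c_2=0$ foreshadowed in the introduction.
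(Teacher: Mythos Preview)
Your overall architecture---reduce to the non-big case, use the quadratic growth of $\chi(X,-mK_X)$ coming from \eqref{eq.rr} and Theorem~\ref{thm.ou}, and split on $\nu(-K_X)$---matches the paper. The handling of $\nu=4$ and $\nu=0$ is fine, and your treatment of $h^4$ is correct.

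The genuine gap is your proposed mechanism for controlling $h^1,h^2,h^3$. You write that Theorem~\ref{thm.pseff} applied to quotients of $\mathcal T_X^{\otimes m}$ will ``extract a fibration or foliation structure'' and that relative vanishing over the base will then bound the cohomology. There is no such extraction available here: Theorem~\ref{thm.pseff} only says that $c_1$ of a torsion-free quotient of $\mathcal T_X^{\otimes m}$ is pseudoeffective; it does not produce a morphism, and on a rationally connected $X$ there is no nontrivial Albanese or MRC target to aim at. Even if one had a foliation, passing from a foliation to cohomology bounds of the shape $h^i(X,-mK_X)=o(m^2)$ is not a standard step and would itself require a substantial argument. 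So the heart of the proof is missing.

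The paper's route is quite different from what you sketch. For $\nu(-K_X)=3$ (Theorem~\ref{thm.nu=3}) it does not estimate cohomology asymptotically at all: it restricts to a very ample smooth hypersurface $H$, observes $-K_X|_H$ is nef and big, and uses Kawamata--Viehweg vanishing on $H$ together with Kodaira vanishing on $X$ to get $H^i(X,-mK_X)=0$ for $i\geq 2$ outright; then $h^0\geq\chi\geq 1$. For $\nu(-K_X)\leq 2$ (Theorem~\ref{thm.nu=2}) the argument is analytic and, crucially, feeds back through Section~\ref{sec3}: one needs Corollary~\ref{cor.pseff} (hence Theorem~\ref{thm.lv}) to force any divisorial or codimension-two Lelong locus of a singular metric on $-K_X$ to be empty via the Siu decomposition and Bedford--Taylor product, so that the multiplier ideal subschemes $V_m$ have $\dim\leq 1$. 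Then Lemma~\ref{lem.van.1} (this is where Theorem~\ref{thm.pseff} is actually used, dually, to show $H^0(X,\Omega^q_X(-mK_X))=0$) combined with the hard Lefschetz theorem of \cite{dps} kills $H^2(X,K_X-mK_X)$ in Lemma~\ref{lem.partial.amp}, and one concludes $h^0\geq\chi\geq 1$. In short, the missing ingredients in your plan are the hypersurface-restriction trick for $\nu=3$, and for $\nu\leq 2$ the combination of Corollary~\ref{cor.pseff}, multiplier ideal sheaves, and the hard Lefschetz theorem, none of which are captured by ``fibration plus relative vanishing''.
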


The following lemma is a first step towards Conjecture \ref{conj.weak.cp}.

\begin{lem}\label{lem.firststep}
Let $X$ be a projective rationally connected manifold such that
the anti-canonical divisor $-K_X$ is strictly nef.
Let $V$ be a nonzero effective divisor on $X$. 
Then, there exists a positive integer $m_0$ (dependent on $V$) such that 
$V-mK_X$ is strictly nef  for any integer $m> m_0$.
Moreover, if $V-mK_X$ is big for some $m> m_0+1$,
then $V-mK_X$ is ample. 
\end{lem}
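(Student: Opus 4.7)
The plan is to split an arbitrary integral curve $C$ on $X$ into the cases $C \not\subset V$ and $C \subset V$, reducing the second case to a Serrano-type statement on $V$ via adjunction.

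For $C \not\subset V$: effectiveness of $V$ gives $V \cdot C \geq 0$, and strict nefness of $-K_X$ combined with the integrality of intersection numbers gives $-K_X \cdot C \geq 1$. Hence $(V - mK_X) \cdot C \geq m > 0$ for every $m \geq 1$, uniformly in $C$.

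For $C \subset V$: since $V$ is Cartier on the smooth $X$, $V$ is Gorenstein, and adjunction reads $V|_V = K_V - K_X|_V$. Writing $L := (-K_X)|_V$, which is nef on $V$ and satisfies $L \cdot C \geq 1$ for every integral curve $C \subset V$ by strict nefness of $-K_X$, we get
\[
(V - mK_X)|_V \;=\; K_V + (m+1)\, L.
\]
The task reduces to showing that $K_V + (m+1)L$ is strictly nef on $V$ for $m$ large -- a Serrano-type assertion. I pass to a resolution $\sigma : \tilde V \to V$ and apply the Cone Theorem on the smooth projective variety $\tilde V$: the decomposition $\NE(\tilde V) = \NE(\tilde V)_{K_{\tilde V} \geq 0} + \sum R_j$ with Mori-length bound $\dim \tilde V + 1$ on each extremal ray shows that $K_{\tilde V} + (m+1)\sigma^* L$ is strictly positive on every non-$\sigma$-contracted integral curve once $m > \dim \tilde V$, since $\sigma^* L$ has degree at least $1$ on such curves. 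The pullback differs from $f^*(V - mK_X)$ (with $f = \iota \circ \sigma$) by the $\sigma$-discrepancy divisor $E$; absorbing its contribution uniformly produces the desired $m_0$ so that $V - mK_X$ is strictly nef on $X$ for every $m > m_0$.

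For the moreover statement, set $D := V - mK_X$ with $m > m_0 + 1$ and $D$ big. The first part gives $D$ strictly nef, hence nef. The twist $D - K_X = V - (m+1)K_X = D + (-K_X)$ is a sum of nef divisors, so nef, and is big (as $D$ is big and $-K_X$ is pseudoeffective). By the Kawamata--Shokurov Base Point Free Theorem $D$ is semiample, so $D = \phi^* A$ for a contraction $\phi : X \to Y$ and an ample divisor $A$ on $Y$. Strict nefness of $D$ forbids $\phi$ from contracting any curve, so $\phi$ is finite and $D$ is ample.

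The main obstacle is handling the $\sigma$-discrepancy divisor $E$ in the singular case: one has to ensure that for every non-$\sigma$-contracted integral curve $\tilde C$ on $\tilde V$, the error $E \cdot \tilde C$ is dominated by the positive term $(m+1)\,\sigma^* L \cdot \tilde C$ for $m$ sufficiently large. In the smooth case ($E = 0$) the argument is immediate; in general this is a standard but careful technicality tied to the geometry of the chosen resolution.
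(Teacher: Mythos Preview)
Your treatment of the case $C \not\subset V$ and of the ``moreover'' clause is correct and matches the paper. The gap is entirely in the case $C \subset V$.

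The step ``the Cone Theorem decomposition shows that $K_{\tilde V} + (m+1)\sigma^* L$ is strictly positive on every non-$\sigma$-contracted integral curve once $m > \dim \tilde V$'' does not follow. Knowing $\sigma^* L \cdot \tilde C \geq 1$ gives no lower bound on $K_{\tilde V}\cdot \tilde C$: in the cone decomposition $[\tilde C] = \alpha + \sum a_j [C_j]$, some extremal classes $C_j$ may be $\sigma$-contracted, so $\sigma^* L \cdot C_j = 0$ while $K_{\tilde V}\cdot C_j < 0$, and their coefficients $a_j$ are not bounded. Thus $(K_{\tilde V} + (m+1)\sigma^* L)\cdot \tilde C$ can be negative for every $m$. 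On top of this, the discrepancy term $E\cdot \tilde C$ is not uniformly bounded either (the coefficients of $E$ can be arbitrarily negative since $V$ is only Gorenstein, not log canonical), so there is nothing to ``absorb uniformly''. This is not a routine technicality; it is precisely the obstruction that makes the restriction-to-$V$ strategy fail without further input.

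The paper sidesteps the whole issue by staying on $X$. Choose $r\in\mathbb Q_{>0}$ small enough that $(X, rV)$ is klt; then the Cone Theorem for this pair on the smooth $X$ (where $-K_X$ is strictly nef, hence $-K_X\cdot C\geq 1$ for every integral curve) gives directly that $K_X + rV + s(-K_X)$ is nef for $s\geq 2n$ and strictly nef for $s>2n$, i.e.\ $V - mK_X$ is strictly nef for $m > m_0 := \lceil (2n-1)/r\rceil$. No restriction, no resolution, no discrepancy divisor. If you want to salvage your adjunction idea, the clean way is to run it on $X$ rather than on $V$: that is exactly what the choice of the klt pair $(X,rV)$ accomplishes.
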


\begin{proof}
For simplicity, 
we can assume that $V$ is prime, and hence $V$ is Cartier on $X$.
Let $n:=\dim X$.
Let $r\in \mathbb Q_{>0}$ be a sufficiently small number such that $(X, r V)$ is a klt pair.
Then, by \cite{liu-matsumura}*{Lemma 2.4}, the $\mathbb Q$-Cartier divisor $K_X+r V+s(-K_X)$ is strictly nef for $s>2n$. 
In particular, $V-mK_X$ is a strictly nef Cartier divisor for $m>m_0:=\lceil \frac{2n-1}{r}\rceil$.

If $V-mK_X$ is big  for some $m> m_0+1$, then 
$(K_X+V-mK_X)-K_X=V-mK_X$ is strictly nef and big.
It follows from the basepoint-free theorem that $K_X+V-mK_X$ is semiample.
That is,  there exists a morphism $f\colon X \to Y$ and a $\mathbb Q$-Cartier ample divisor $H$ on $Y$ such that $K_X+V-mK_X=f^*H$.
Since $K_X+V-mK_X$ is strictly nef, $f$ has to be finite. Hence,
$K_X+V-mK_X=f^*H$ is ample.
In particular, $V-mK_X=(K_X+V-mK_X)+(-K_X)$ is ample.
\end{proof}

The following calculations are much the same as those in \cite{serrano}*{Lemma 1.3} and
\cite{liu-matsumura}*{Theorems 3.3 and 3.5}.

\begin{cor}\label{cor.cal}
Let $X$ be a projective rationally connected 
manifold of dimension $n$ such that
the anti-canonical divisor $-K_X$ is strictly nef.
Let $V$ be a nonzero prime divisor on $X$. 
If $V-mK_X$ is not ample for any $m\gg 1$,
then we have that 
\[
0=V^n=V^{n-1}\cdot K_X=\cdots =V \cdot K_X^{n-1}=K_X^n. 
\]
\end{cor}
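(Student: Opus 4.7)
The plan is to exploit Lemma \ref{lem.firststep} together with an elementary polynomial-vanishing argument. First, I would invoke Lemma \ref{lem.firststep} to fix the threshold $m_0$ beyond which $V - mK_X$ is a strictly nef Cartier divisor. The hypothesis says that $V-mK_X$ is not ample for all sufficiently large $m$, so in particular for all integers $m > m_0 + 1$. The second half of Lemma \ref{lem.firststep} (strictly nef plus big implies ample) then forces $V - mK_X$ to fail to be big as well for every such $m$.

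Next, I would use the standard fact that for a nef divisor $D$ on an $n$-dimensional projective manifold, bigness is equivalent to $D^n > 0$; so failure of bigness for our nef $V - mK_X$ gives $(V - mK_X)^n = 0$ for every integer $m > m_0 + 1$. Expanding by the binomial theorem,
\[
(V - mK_X)^n \;=\; \sum_{k=0}^{n} \binom{n}{k}(-m)^k \, V^{n-k}\cdot K_X^k,
\]
this is a polynomial in the variable $m$ of degree at most $n$ which vanishes at the infinitely many integers $m > m_0 + 1$. Hence the polynomial is identically zero, and since the binomial coefficients are nonzero, every intersection number $V^{n-k}\cdot K_X^k$ must vanish for $k = 0, 1, \ldots, n$. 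This is exactly the claimed chain
\[
0 = V^n = V^{n-1}\cdot K_X = \cdots = V\cdot K_X^{n-1} = K_X^n.
\]

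There is essentially no real obstacle here once Lemma \ref{lem.firststep} is available: the entire argument reduces to extracting a polynomial identity from the strict-nef-plus-not-big conclusion and observing that it has too many zeros. The only input beyond the lemma is the standard criterion that a nef divisor on a projective manifold is big if and only if its top self-intersection is strictly positive, which is the place where the conceptual work of translating strict nefness into ampleness has already been absorbed via the basepoint-free theorem inside Lemma \ref{lem.firststep}.
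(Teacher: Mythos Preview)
Your argument is correct, but it differs from the paper's route. The paper fixes a single sufficiently large $m$, expands
\[
0=(V-mK_X-K_X)^n=\sum_{k=0}^{n}\binom{n}{k}(V-mK_X)^k\cdot(-K_X)^{n-k},
\]
and uses that each summand is nonnegative (being an intersection of nef classes) to force every term to vanish; the individual equalities $V^{i}\cdot K_X^{n-i}=0$ are then extracted by an inductive unwinding starting from $(-K_X)^n=0$. You instead let $m$ vary, obtain $(V-mK_X)^n=0$ for infinitely many integers $m$, and read off the vanishing of all coefficients of the resulting polynomial in $m$. Your approach is a bit more direct and avoids the iterative step as well as the general fact that mixed intersections of nef divisors are nonnegative; the paper's approach, on the other hand, needs only a single value of $m$ and leans on positivity rather than on infinitely many vanishing conditions. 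One small wording point: the hypothesis only guarantees non-ampleness for all sufficiently large $m$, not literally for every $m>m_0+1$, so strictly speaking you should take $m$ larger than both $m_0+1$ and the threshold implicit in ``$m\gg 1$''; this does not affect the argument, since infinitely many $m$ remain.
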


\begin{proof}
By Lemma \ref{lem.firststep}, $V-mK_X$ and $V-mK_X-K_X$ are both strictly nef but not big for $m\gg 1$.
In particular, we have $(V-mK_X-K_X)^n=0$ for some $m$.
This yields that 
\[
0=(V-mK_X-K_X)^n=\sum_{k=0}^n \big(_k^n\big)(V-mK_X)^k\cdot (-K_X)^{n-k}.
\]
Then, all the terms on the right-hand side should be zero since $-K_X$ is also nef. 
From $(V-mK_X)\cdot (-K_X)^{n-1}=(-K_X)^n=0$, we have that 
$V\cdot (-K_X)^{n-1}=0$; combining with $(V-mK_X)^2\cdot (-K_X)^{n-2}=0$,
we can conclude that $V^2\cdot (-K_X)^{n-2}=0$, and so forth. Finally,  we can deduce the desired equalities.  
\end{proof}

\subsection{Almost strictly nef divisors}
We recall from \cite{liu-matsumura}*{Subsection 2.1} 
the definition and some useful properties of almost strictly nef divisors,
which are the birational analogues of strictly nef divisors.

\begin{defn}[\cite{ccp}*{Definition 1.1} or \cite{liu-matsumura}*{Definition 2.1}]\label{defn.asn}
A $\mathbb Q$-Cartier divisor $L$ on a normal variety $X$ 
is called {\em{almost strictly nef}}
if there exists a surjective birational 
morphism $\mu\colon X\to X^*$ and a strictly nef $\mathbb Q$-Cartier divisor $L^*$ on $X^*$
such that $L=\mu^*L^*$. 
When a birational morphism $\mu\colon X\to X^*$  is specified, 
we say that $L$ is {\em{almost strictly nef with respect to $\mu$}}. 
\end{defn}

\begin{lem}[\cite{liu-matsumura}*{Lemma 2.4}]\label{lem.almost.nef}
Let $(X, \Delta)$ be a log canonical pair of dimension $n$ and 
$L$ be an almost strictly nef Cartier divisor on $X$
with respect to $\mu\colon X\to X^*$.
If $K_X+\Delta$ is $\mu$-nef,
then the $\mathbb Q$-Cartier divisor $K_{X}+\Delta+tL$ is nef for $t\geq 2n$. 
\end{lem}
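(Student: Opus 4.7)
The plan is to argue by contradiction, combining the cone theorem for lc pairs (with its length bound on extremal rational curves) with the integrality of $L\cdot C$ for a nef Cartier divisor $L$. Write $L=\mu^{*}L^{*}$ with $L^{*}$ strictly nef on $X^{*}$; in particular $L$ is nef on $X$. Suppose for contradiction that $K_{X}+\Delta+tL$ fails to be nef for some fixed $t\geq 2n$. Because $L$ is nef, any class on which $K_{X}+\Delta+tL$ is negative is automatically $(K_{X}+\Delta)$-negative, so the obstruction lies in the $(K_{X}+\Delta)$-negative part of $\NE(X)$.

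Next I would apply the cone theorem for log canonical pairs (from \cite{fujino-foundations}) to produce a rational curve $C$ spanning a $(K_{X}+\Delta+tL)$-negative extremal ray, together with the length bound
\[
0<-(K_{X}+\Delta)\cdot C\leq 2n.
\]
Substituting into $(K_{X}+\Delta+tL)\cdot C<0$ yields $t(L\cdot C)<2n$, so $L\cdot C<1$ whenever $t\geq 2n$. Since $L$ is a nef Cartier divisor and $C$ is an integral curve, $L\cdot C$ is a nonnegative integer, and the only option is $L\cdot C=0$.

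Finally I would translate $L\cdot C=0$ back to $X^{*}$ via the projection formula: $0=L\cdot C=L^{*}\cdot\mu_{*}C$. If $\mu(C)$ were a curve, then strict nefness of $L^{*}$ would give $L^{*}\cdot\mu_{*}C>0$, contradicting $L\cdot C=0$; hence $\mu$ must contract $C$. But then $\mu$-nefness of $K_{X}+\Delta$ forces $(K_{X}+\Delta)\cdot C\geq 0$, contradicting the choice of $C$ as $(K_{X}+\Delta)$-negative. The main obstacle I anticipate is the appeal to the lc cone theorem with a length bound on rational curves: in the klt case this is the classical Kawamata bound obtained via bend-and-break, but for merely log canonical pairs one has to invoke Fujino's extension of the cone theorem, or alternatively pass to a dlt modification $\pi\colon Y\to X$ and argue there, noting that both the $\mu$-nefness of $K_{X}+\Delta$ and the almost-strict-nefness of $L$ are preserved by pullback along $\pi$.
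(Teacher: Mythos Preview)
The paper does not supply its own proof of this lemma; it is quoted directly from \cite{liu-matsumura}*{Lemma 2.4} without argument. Your proposal is correct and is exactly the standard proof one expects there: invoke the cone theorem for log canonical pairs to obtain a $(K_X+\Delta)$-negative extremal rational curve $C$ with $0<-(K_X+\Delta)\cdot C\le 2n$ on which $K_X+\Delta+tL$ is negative, use integrality of $L\cdot C$ together with $t\ge 2n$ to force $L\cdot C=0$, and then derive a contradiction from the $\mu$-nefness of $K_X+\Delta$ once the projection formula shows $C$ is contracted by $\mu$. The obstacle you anticipate is not one: Fujino's cone theorem for lc pairs (in \cite{fujino-foundations}) already includes the length bound $2\dim X$, so the detour through a dlt modification is unnecessary.
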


For almost strictly nef divisors, 
we have a birational version of the log version of Serrano's conjecture
(see \cite{ccp}*{Conjecture 2.2} or \cite{liu-matsumura}*{Conjecture 2.6}).
For the inductive procedure,
we state the conjecture in the following weakened form:

\begin{conj}\label{conj.big}
Let $X$ be a projective canonical 
variety of dimension $n$ 
and $L$ be an almost strictly nef Cartier divisor on $X$.
Then, the divisor  $K_X+tL$ is big for $t>2n$. 
\end{conj}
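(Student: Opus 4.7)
The plan is to combine the nefness output of Lemma~\ref{lem.almost.nef} with a case analysis on whether $L^{*}$ is big. Lemma~\ref{lem.almost.nef} applied with $\Delta=0$ (after possibly replacing $X$ by a $\mu$-relative minimal model so that $K_X$ becomes $\mu$-nef) shows that $K_X+tL$ is nef for $t\geq 2n$. The problem then reduces to showing $(K_X+tL)^{n}>0$ for some $t>2n$.

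If $L^{*}$ is big on $X^{*}$, then $L=\mu^{*}L^{*}$ is also big on $X$, and the decomposition $K_X+tL=(K_X+2nL)+(t-2n)L$ exhibits $K_X+tL$ as the sum of a nef divisor and a positive multiple of a big divisor; such a sum is big, so $K_X+tL$ is big for every $t>2n$.

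The difficult case is when $L^{*}$ is strictly nef but not big, so that $(L^{*})^{n}=0$ and hence $L^{n}=0$. I would argue by contradiction: suppose $K_X+tL$ is nef but not big for all $t>2n$. Then $(K_X+tL)^{n}=(K_X+(t+1)L)^{n}=0$, and expanding the difference of these two vanishing intersections exactly as in Corollary~\ref{cor.cal} forces the full system of vanishings $K_X^{a}\cdot L^{n-a}=0$ for $0\leq a\leq n$. To derive a contradiction I would induct on $n$: take a general very ample hypersurface $H\subset X$, verify that $L|_{H}$ remains almost strictly nef on $H$, apply the inductive hypothesis to deduce bigness of $K_H+tL|_{H}=(K_X+H+tL)|_{H}$, and try to lift this bigness back to $X$ through the adjunction sequence
\[
0\to \mathcal O_X(K_X+tL)\to \mathcal O_X(K_X+H+tL)\to \mathcal O_H(K_H+tL|_{H})\to 0
\]
combined with a suitable vanishing statement for the left-hand term.

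The main obstacle lies precisely in this second case. Lifting bigness from $H$ back to $X$ requires a vanishing theorem for $K_X+tL$ that Kawamata--Viehweg does not provide, since $L$ is not big, and the usual remedy of perturbing $L$ by a small ample divisor breaks down because strict nefness does not upgrade to ampleness. This gap is essentially the open part of Serrano's conjecture itself, and in the four-dimensional setting of the present paper it is circumvented by imposing $c^{2}_{1}(X)\cdot c_{2}(X)\neq 0$ and applying Hirzebruch--Riemann--Roch together with Ou's positivity result (Theorem~\ref{thm.ou}).
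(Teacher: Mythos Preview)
The statement you are attempting to prove is Conjecture~\ref{conj.big} in the paper, and it is stated there precisely as a \emph{conjecture}: the paper offers no proof whatsoever. It is used only as a hypothesis in Proposition~\ref{prop.ind}, which shows that Conjecture~\ref{conj.big} in dimension $n-1$ implies the weak Campana--Peternell conjecture in dimension $n$. So there is no ``paper's own proof'' to compare against, and any complete argument you could give would go beyond what the paper claims.

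Your proposal does not close the gap, and you are honest about this in the last paragraph. The case $L^{*}$ big is indeed trivial. In the remaining case your inductive scheme breaks exactly where you say it does: from bigness of $(K_X+H+tL)|_H$ on the hyperplane section you cannot deduce bigness of $K_X+tL$ on $X$, because the vanishing of $H^1(X,\mathcal O_X(K_X+tL))$ is unavailable when $L$ is nef but not big, and no perturbation trick repairs this. That obstruction is not a technicality; it is the content of Serrano's conjecture, which is open already for Calabi--Yau threefolds and is precisely the reason the paper imposes the hypothesis $c_1^2(X)\cdot c_2(X)\neq 0$.

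There is also a preliminary issue you pass over too quickly. You write ``after possibly replacing $X$ by a $\mu$-relative minimal model so that $K_X$ becomes $\mu$-nef'' in order to invoke Lemma~\ref{lem.almost.nef}. This changes the variety, and you would then need to argue that bigness of $K_{X'}+tL'$ on the new model $X'$ transfers back to bigness of $K_X+tL$ on the original $X$; since $X$ and $X'$ need only be connected by a sequence of divisorial contractions and flips over $X^{*}$, this requires a separate justification that you have not supplied.
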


\begin{prop}\label{prop.ind}
Conjecture \ref{conj.weak.cp} holds in dimension $n$ if Conjecture \ref{conj.big} holds in dimension $n-1$.
\end{prop}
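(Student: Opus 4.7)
The plan is to reduce the weak Campana--Peternell statement to bigness of $V-mK_X$ via Lemma \ref{lem.firststep}, and then to use Conjecture \ref{conj.big} in dimension $n-1$ applied to a resolution of $V$ to force a contradiction with the numerical vanishings coming from Corollary \ref{cor.cal}.

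First, Lemma \ref{lem.firststep} gives that $V-mK_X$ is strictly nef for $m\gg 1$ and becomes ample the moment it is big, so it suffices to establish bigness. I would argue by contradiction: assume that $V-mK_X$ is not big for every $m\gg 1$. Then Corollary \ref{cor.cal} yields $V^i\cdot K_X^{n-i}=0$ for all $0\le i\le n$.

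Next, take a resolution $\pi\colon \tilde V\to V$ (assuming $V$ is normal for convenience; otherwise first replace $V$ by its normalization) and set $L:=(-K_X)|_V$, a Cartier divisor on $V$. Because every curve on $V$ is a curve on $X$ and $-K_X$ is strictly nef, $L$ is strictly nef on $V$; hence $\pi^*L$ is an almost strictly nef Cartier divisor on the smooth (therefore canonical) projective $(n-1)$-fold $\tilde V$. Applying Conjecture \ref{conj.big} in dimension $n-1$ gives that $K_{\tilde V}+t\pi^*L$ is big on $\tilde V$ for any $t>2(n-1)$. Fix an integer $t\ge 2n-1$. Writing $K_{\tilde V}=\pi^*K_V+E$ for the $\pi$-exceptional discrepancy $E$, one has the sheaf-theoretic inclusion $\pi_*\mathcal O_{\tilde V}(k(K_{\tilde V}+t\pi^*L))\subseteq \mathcal O_V(k(K_V+tL))$, so $h^0(V,k(K_V+tL))\ge h^0(\tilde V,k(K_{\tilde V}+t\pi^*L))$. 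Bigness on $\tilde V$ therefore forces $K_V+tL$ to be big on $V$. By adjunction $K_V=(K_X+V)|_V$, so
\[
K_V+tL=\big(V-(t-1)K_X\big)|_V.
\]

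For the contradiction, put $s:=t-1\ge 2n-2$, chosen also large enough that $V-sK_X$ is strictly nef on $X$. Then $(V-sK_X)|_V$ is nef on $V$, and
\[
\big((V-sK_X)|_V\big)^{n-1}=(V-sK_X)^{n-1}\cdot V=\sum_{k=0}^{n-1}\binom{n-1}{k}(-s)^{n-1-k}V^{k+1}\cdot K_X^{n-1-k}=0,
\]
because every term is a degree-$n$ intersection of $V$ and $K_X$, vanishing by Corollary \ref{cor.cal}. A nef divisor on a projective variety with vanishing top self-intersection is not big, so $(V-sK_X)|_V$ is not big on $V$---contradicting what was derived above. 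Thus $V-mK_X$ is big, and hence ample, for $m\gg 1$.

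The main obstacle I anticipate is the sheaf-theoretic transfer of bigness from $\tilde V$ down to $V$ when $V$ has singularities worse than canonical: the discrepancy divisor $E$ may then have negative components, and if $V$ is not normal one must additionally track the conductor under normalization. The inequality of global sections required above still follows from the standard pushforward analysis, but making the bookkeeping airtight, in parallel with \cite{liu-matsumura}*{Theorems 3.3 and 3.5}, is where the argument takes the most care.
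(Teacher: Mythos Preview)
Your argument is correct and follows the same overall strategy as the paper: apply Conjecture~\ref{conj.big} on a birational model of $V$, then contradict the vanishings of Corollary~\ref{cor.cal} via the top self-intersection of $(V-sK_X)|_V$.

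The one technical difference is the choice of model. The paper passes to the relative canonical model $T$ of the normalization of $V$, where one has $K_T+B=\mu^*(K_X+V)|_V$ with $B$ \emph{effective} (this single formula absorbs both the conductor of the normalization and the discrepancies). Then bigness of $K_T+tL$ plus the effective $B$ gives bigness of $K_T+B+tL=\mu^*((V-(t-1)K_X)|_V)$ directly on $T$; since this pullback is also nef, its top power is positive, yielding the contradiction in one stroke. You instead work on a smooth resolution $\tilde V$ and transfer bigness down to $V$ by comparing $H^0$'s. That works (and your inequality $h^0(V,k(K_V+tL))\ge h^0(\tilde V,k(K_{\tilde V}+t\pi^*L))$ is valid for $V$ normal Gorenstein, since the positive part of the discrepancy is effective and $\pi$-exceptional), but it forces you to handle the non-normal case separately via the conductor, exactly as you anticipate in your final paragraph. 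The paper's use of the relative canonical model packages these bookkeeping issues into the single effective divisor $B$ and replaces your section-pushdown by the simpler ``big $+$ effective $=$ big''.
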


\begin{proof}
We can assume that $V$ is prime. If $V-mK_X$ is not ample for $m\gg 1$,
then by Corollary \ref{cor.cal}, we have that
\begin{equation}\label{eq.key}
0=V^n=V^{n-1}\cdot K_X=\cdots =V \cdot K_X^{n-1}=K_X^n.
\end{equation}
Let $\mu \colon T\to V$ be
 the composition of the normalization of $V$ and the
 relative canonical model (see \cite{liu-matsumura}*{Subsection 2.3} for the definition) 
of its normalization. As in \cite{liu-matsumura}*{Theorem 3.3},
there exists an effective divisor $B$ such that
$K_T+B=\mu^*(K_X+V)|_T$.
Set 
\[
L:=\mu^*(-K_X)|_T,
\]
which is almost strictly nef with respect to $\mu$.
By the assumption, $K_T+tL$ is big for $t>2n-2$.
It follows that $K_T+B+tL$ is also big for $t>2n-2$. Moreover,
\[
K_T+B+tL=\mu^*(K_X+V-tK_X)|_T
\]
is nef for $t\gg 1$ since $K_X+V-tK_X$ is nef for $t\gg 1$ by Lemma \ref{lem.firststep}.
In conclusion, we obtain that
\[
0<(K_T+B+tL)^{n-1}=(\mu^*(K_X+V-tK_X)|_T)^{n-1}=(K_X+V-tK_X)^{n-1}\cdot V,
\]
contradicting the equation \eqref{eq.key}.
\end{proof}

\begin{rem}
Similar results as Lemma \ref{lem.firststep}, Corollary \ref{cor.cal} and Proposition \ref{prop.ind} also hold
when $K_X$ is strictly nef. 
Combining with \cite{liu-matsumura}*{Theorem 3.5}, we have seen that in 
all the three important special cases of Serrano's conjecture 
($K_X$ is strictly nef, $-K_X$ is strictly nef, and $K_X$ is trivial),
 we can perform the induction on the dimension and reduce the problem 
onto the prime divisor $V$.
\end{rem}


\subsection{Prime Calabi--Yau divisors}
As in \cite{liu-matsumura}, we also have to consider the so-called prime Calabi--Yau divisors
in this paper. 
A prime divisor $D$ on a normal variety $X$ is said to be a \emph{prime Calabi--Yau divisor}  if
$D$ is a normal variety with at worst canonical singularities 
satisfying that $K_D\sim 0$ and  $H^1(D, \mathcal{O}_D)=0$. 

\begin{lem}\label{lem.nef}
Let $X$ be a projective manifold such that
the anti-canonical divisor $-K_X$ is nef.
Let  $D$ be a prime Calabi--Yau divisor on $X$.
Then,  $D$ is nef.
\end{lem}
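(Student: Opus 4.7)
The plan is to show that the only curves where nefness of $D$ could fail are those lying inside $D$, and then to use adjunction together with the hypothesis $K_D\sim 0$ to identify $D|_D$ with $-K_X|_D$, which is manifestly nef.

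First I would observe that for any curve $C\subset X$ with $C\not\subset D$, we have $D\cdot C\geq 0$ automatically since $D$ is effective and the intersection is a nonnegative count of points. Hence proving that $D$ is nef is equivalent to proving that the Cartier divisor $D|_D$ is nef on $D$.

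Next I would invoke adjunction. Since $X$ is smooth, $D$ is Cartier on $X$; and since $D$ is normal with at worst canonical singularities, the adjunction formula $K_D\sim (K_X+D)|_D$ holds as $\mathbb{Q}$-Cartier divisors on $D$. Using the Calabi--Yau hypothesis $K_D\sim 0$, this gives
\[
D|_D \sim -K_X|_D.
\]
Because $-K_X$ is nef on $X$, its restriction to any subvariety is nef; in particular, for every curve $C\subset D$ one has
\[
D\cdot C = D|_D\cdot C = -K_X|_D \cdot C = -K_X\cdot C \geq 0.
\]
Combined with the first observation, this shows $D\cdot C\geq 0$ for every curve $C\subset X$, i.e.\ $D$ is nef.

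The argument is essentially a one-line application of adjunction, so there is no serious obstacle; the only point that requires care is verifying that adjunction is valid as an equality of $\mathbb{Q}$-Cartier divisors (not merely of reflexive sheaves) for the possibly singular normal divisor $D$. This is justified by the fact that $D$ has canonical singularities and $K_D$ is itself Cartier because $K_D\sim 0$, so both sides of $(K_X+D)|_D\sim K_D$ are genuine Cartier divisors and the intersection-theoretic computation above is unambiguous. Note that the assumption $H^1(D,\mathcal O_D)=0$ from the definition of prime Calabi--Yau divisor is not needed here; it will enter only in subsequent arguments.
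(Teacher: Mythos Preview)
Your proof is correct and follows essentially the same approach as the paper: both split into the cases $C\not\subset D$ and $C\subset D$, and in the latter case use adjunction $(K_X+D)|_D\sim K_D\sim 0$ to identify $D|_D$ with $-K_X|_D$. Your additional remarks on the validity of adjunction for singular $D$ and the irrelevance of $H^1(D,\mathcal O_D)=0$ are accurate but not needed for the argument.
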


\begin{proof}
By the adjunction formula \cite{kollar-mori}*{Proposition 5.73},
we have that $(K_X+D)|_D=K_D\sim 0$.
Let $C$ be a curve on $X$. If $C\not\subset D$, then we have $D\cdot C\geq 0$. 
If $C\subset D$, then we have $D\cdot C=D|_D\cdot C=(K_D-K_X|_D)\cdot C=-K_X\cdot C \geq 0$.
Therefore, $D$ is nef.
\end{proof}

\begin{rem}
We assume that $X$ is a projective manifold of dimension $n$ such that
$c^{n-2}_1(X)\cdot c_2(X)=0$ and the anti-canonical divisor $-K_X$ is nef.
We also assume that $D\sim-K_X$ is a smooth prime Calabi--Yau divisor.
Then, from the exact sequence
\[
0\to \mathcal{T}_D \to \mathcal {T}_X|_D \to \mathcal O_D(D)\to 0,
\]
we have that $c_2(X)|_D=c_2(D)+c_1(D)\cdot D|_D=c_2(D)$. 
It follows that 
\[
(-K_X|_D)^{n-3}\cdot c_2(D)=(-K_X|_D)^{n-3} \cdot c_2(X)|_D=(-K_X)^{n-2}\cdot c_2(X) =
c^{n-2}_1(X)\cdot c_2(X)=0.
\]
In particular, we have that $(-K_X|_D) \cdot c_2(D)=0$ in dimension $n=4$.
If moreover $-K_X$ is strictly nef, then these conditions turn out to be the remaining case
of Serrano's conjecture in dimension 3.
This is the only reason that we have to treat Campana--Peternell's conjecture in dimension 4
under the assumption that  $c^2_1(X)\cdot c_2(X)\neq 0$.   
\end{rem}

\section{Partial results on Conjecture \ref{conj.weak.cp}}\label{sec3}

In this section, we prove Conjecture \ref{conj.weak.cp} in dimension 4
under the assumption that $c^2_1(X)\cdot c_2(X)\neq 0$.

\begin{thm}\label{thm.lv}
Let $X$ be a projective rationally connected smooth fourfold such that 
the anti-canonical divisor $-K_X$ is strictly nef.
Let $V$ be a nonzero prime divisor on $X$. 
If one of the following conditions 
\begin{enumerate}
 \item $V$ is not a prime Calabi--Yau divisor,
 \item $V\not\sim -K_X$, \text{or}
 \item $c^2_1(X)\cdot c_2(X)\neq 0$
\end{enumerate}
holds, then $V-mK_X$ is ample for $m\gg 1$.
\end{thm}

\begin{proof} 
The first two steps of the proof  follow exactly 
the proofs of \cite{liu-matsumura}*{Theorems 3.3 and 3.5}. 
We sketch the proof here for the reader's convenience.

\begin{step} 
We assume that $V-mK_X$ is not ample for $m\gg 1$.
By Corollary \ref{cor.cal}, we obtain that
\begin{equation}\label{eq.key.1}
0=V^4=V^3 \cdot K_X=V^2 \cdot K_X^2=V \cdot K_X^3=K_X^4.
\end{equation}
Let $T$ be a relative canonical model of the normalization of $V$ and $\mu\colon T\to V$ 
be the corresponding morphism. 
As in \cite{liu-matsumura}*{Theorem 3.3},
there exists an effective divisor $B$ such that
$K_T+B=\mu^*(K_X+V)|_T$. Set $L:=\mu^*(-K_X)|_T$, which is almost strictly nef with respect to $\mu$.
Then for $t\gg 1$, $K_T+tL$ is nef by Lemma \ref{lem.almost.nef},
but not big by Proposition \ref{prop.ind}.
Similarly, set 
\[D_m:=\mu^*(K_X+V-mK_X)|_T,
\]
which is almost strictly nef with respect to $\mu$ 
for any $m\gg 1$ since $K_X+V-mK_X$ is strictly nef for any $m\gg 1$ 
by Lemma \ref{lem.firststep}.
Then, $K_T+6D_m$ is nef for $m\gg 1$ by Lemma \ref{lem.almost.nef}.
For $m\gg 1$, we can see that
\[
K_T+B+6D_m=\mu^*(7(K_X+V)-6mK_X)|_T=\mu^*(7V-(6m-7)K_X)|_T
\]
is nef by Lemma \ref{lem.firststep},  but not big by \eqref{eq.key.1}.
It follows that $K_T+6D_m$ is nef but not big for $m\gg 1$. 
Then using similar calculations for $K_T+tL$ and $K_T+6D_m$  as in Corollary \ref{cor.cal},
we have that 
\begin{equation}\label{eq.key.2}
\begin{split}
&K_T^3=K_T^2 \cdot L=K_T \cdot L^2=L^3=0,
\\ 
&K_T^3=K_T^2 \cdot D_m=K_T \cdot D_m^2=D_m^3=0,
\\ 
&K_T^3=K_T^2 \cdot D_{m+1}=K_T \cdot D_{m+1}^2=D_{m+1}^3=0,
\\ 
&2K_T \cdot D_{m}\cdot L=K_T \cdot (D_{m+1}^2-D_m^2-L^2)=0
\end{split}
\end{equation}
for any $m\gg 1$.
Directly from \eqref{eq.key.1} or from $(D_m+L)^3=D_{m+1}^3=0$ in \eqref{eq.key.2}, 
we can see that $D_m^2\cdot L=D_m \cdot L^2=0$.
It follows that  
\begin{equation}\label{eq.key.3}
\begin{split}
&L^2 \cdot B=L^2 \cdot  (K_T+B)= L^2 \cdot(D_m-mL)=0,
\\ 
&D_{m}^2 \cdot B=D_m^2 \cdot  (K_T+B)=D_m^2 \cdot  (D_m-mL)=0,
\\ 
&L \cdot K_T \cdot  B=L \cdot  K_T \cdot  (K_T+B)=L \cdot  K_T \cdot  (D_m-mL)=0,
\\
&D_{m} \cdot K_T \cdot  B=D_m \cdot  K_T \cdot  (K_T+B)=D_m \cdot  K_T \cdot  (D_m-mL)=0,
\end{split}
\end{equation}
and
\begin{equation}\label{eq.key.4}
\begin{split}
&K^2_T \cdot  B=K_T^2 \cdot  (K_T+B)=K_T^2 \cdot (D_m-mL)=0,
\\ 
&K_T \cdot  B^2=K_T \cdot  (K_T+B)^2=K_T \cdot (D_m-mL)^2=0,
\\ 
&B^3=(K_T+B)^3=(D_m-mL)^3=0
\end{split}
\end{equation}
for any $m\gg 1$. In the next step, we derive contradictions on $B$ using these equations as in 
\cite{liu-matsumura}*{Theorems 3.3 and 3.5}.
\end{step}

\begin{step}
If some integral component $S$ of $B$ is not $\mu$-exceptional, then
as $L$ and $K_T+tL$ are nef, we have that $0=L^2 \cdot  B\geq a_S L^2 \cdot  S\geq 0$ and
\[
0=(K_T+tL) \cdot L \cdot B\geq a_S(K_T+tL) \cdot L \cdot S\geq 0
\] 
by \eqref{eq.key.3}, where $a_S> 0$ is the coefficient of $S$ in $B$. 
Similar results also hold after replacing $L$ by $D_m$ and $K_T+tL$ by $K_T+6D_m$ for $m\gg 1$.
These indicates that 
\begin{equation}\label{eq.key.5}
\begin{split}
&L \cdot K_T \cdot S=L^2 \cdot S=0,
\\ 
&D_m \cdot K_T \cdot S=D_m^2 \cdot S=0,
\\ 
&2L\cdot D_m \cdot S=(D_{m+1}^2-D_m^2-L^2) \cdot S=0
\end{split}
\end{equation}
for $m\gg 1$.
By exactly the same arguments as in Case 1 of the proof of \cite{liu-matsumura}*{Theorems 3.3},
there exists a curve $C\in |(K_T+S+rL)|_S|$ which is not contained in the $\mu|_S$-exceptional locus
for $r>3$. 
Then, from \eqref{eq.key.5}, we can conclude that
\[
0<L|_S\cdot C=L \cdot (K_T+S+rL)  \cdot S=L \cdot K_T  \cdot S+L \cdot S^2
+rL^2 \cdot S=L \cdot S^2.
\]
On the other hand, from \eqref{eq.key.5},
we have that 
\[
\begin{split}
0&=L \cdot (D_m-mL) \cdot S=L \cdot (K_T+B)  \cdot S=L \cdot K_T \cdot S+L\cdot B\cdot S
\\ 
&=L\cdot (B-a_SS) \cdot S+a_S L\cdot S^2\geq a_S L\cdot S^2. 
\end{split}
\]
This is a contradiction.

Therefore, we assume that $\dim \mu(B)\leq 1$.
In this case, $B$ is $\mu$-exceptional and $V$ is normal. 
Assume that $B\neq 0$. Then, $\dim \mu(B)=1$ by $B^3=0$ in \eqref{eq.key.4}.
As in \cite{liu-matsumura}*{Theorem 3.3}, 
we have that $(K_T+\mu^*H) \cdot B \cdot L>0$ for a very ample divisor $H$ on $V$. 
It follows that
 $H \cdot \mu(B) \cdot (-K_X)|_V>0$, which is impossible. Therefore, we have that $B=0$, 
$T=V$ is a Gorenstein variety with at worst canonical singularities, 
$L=-K_X|_V$ is strictly nef,
and $K_V+tL$ is not ample for any $t\gg 1$ (see Proposition \ref{prop.ind}).
The same as the last paragraph of the proof of  \cite{liu-matsumura}*{Theorem 3.5},
we can assume that $q(V)=h^1(V,\mathcal O_V)=0$ and $K_V\sim_{\mathbb Q}0$.
By \cite{liu}*{Lemma 2.1} (whose proof is based on the strategy of \cite{lp}),
we can further assume that $K_V\sim 0$, that is, $V$ is a prime Calabi--Yau divisor. In conclusion of the first two steps, we prove the desired result under condition (1):
\begin{claim}
If $V$ is not a prime Calabi--Yau divisor, then $V-mK_X$ is ample for $m\gg 1$.
\end{claim}
\end{step}

\begin{step}
In this step, we consider that $V$ is a prime Calabi--Yau divisor. In particular,
$0\sim K_V=(K_X+V)|_V$ by definition and the adjunction formula \cite{kollar-mori}*{Proposition 5.73}.
Then, $V$ is nef on $X$ by Lemma \ref{lem.nef}.
We consider the long cohomology sequence induced by the exact sequence
\begin{equation}\label{eq.pcy}
0\to \mathcal O_X(K_X)\to \mathcal O_X(K_X+V)\to \mathcal O_V(K_X+V)\simeq \mathcal O_V\to 0.
\end{equation}
Since $X$ is rationally connected,
we have that  $H^0(X, \mathcal O_X(K_X))=H^4(X, \mathcal O_X)=0$ and
$H^1(X, \mathcal O_X(K_X))=H^3(X, \mathcal O_X)=0$ by Serre's duality. 
It follows that 
\[
H^0(X, \mathcal O_X(K_X+V))\cong H^0(V, \mathcal O_V)\cong \mathbb C.
\]
That is, there exists an effective divisor $V_1\sim K_X+V$ and 
all the coefficients of integral components of $V_1$ are positive integers. 
If $V_1 \neq 0$ and some integral component $E$ of $V_1$ is not a prime Calabi--Yau divisor,
then $E-mK_X$ is ample for $m\gg 1$ by the above Claim; 
it follows that 
\[
V-(2m-1)K_X=V_1-2mK_X=E-mK_X+(V_1-E)-mK_X
\]
is also ample for $m\gg 1$, since $(V_1-E)-mK_X$ is strictly nef for $m\gg 1$ by Lemma \ref{lem.firststep}.
Therefore, we assume that any integral component of $V_1$ is a prime Calabi--Yau divisor.
Note that in this case, $V_1$ is nef by Lemma \ref{lem.nef} again.

Repeatedly using \eqref{eq.pcy} for each integral component of $V_{i-1}$, 
we will get a sequence of $V_i$  which is 
a union of prime Calabi--Yau divisors  
such that $V_{i}\sim r_iK_X+V_{i-1}$ for some integer $r_{i} \geq 1$
and $V-(\sum r_i)(-K_X)=V_i$ is nef by Lemma \ref{lem.nef}.
If $V_i\neq 0$ for infinitely many $i$, then for a fixed curve $C$ on $X$,
we will have 
\[
V_i\cdot C=V\cdot C-(\sum r_i)(-K_X)\cdot C<0
\]
for $i\gg 1$ since $-K_X$ is strictly nef and $\sum r_i\to +\infty$ for $i\to +\infty$. 
This contradicts that $V_i$ is nef. 
Hence, we must stop after finitely 
many steps, that is, there exists some integer $n>0$ such that $V_n=0$. It follows that $V\sim -(\sum_{i=1}^n r_i)K_X$. 
Since $K_V=(K_X+V)|_V\sim 0$ and $-K_X$ is strictly nef, we must have that $\sum_{i=1}^n r_i=1$.
That is, $V_1=0$ and $V\sim -K_X$.
In conclusion, we prove in this step that if $V\not\sim -K_X$, then $V-mK_X$ is ample for $m\gg 1$.
\end{step}
\begin{step}
Finally, in the case  $V\sim -K_X$,  we consider the exact sequence:
\[
0\to \mathcal O_X(-mK_X+K_X)\to \mathcal O_X(-mK_X)\to \mathcal O_V(-mK_X)\simeq \mathcal O_V(mV)\to 0.
\]
By \eqref{eq.rr} and condition $(3)$, we obtain that 
\[
\chi(V,mV)=\chi(X, -mK_X)-\chi(X, -mK_X+K_X)
=\frac{c^2_1(X)\cdot c_2(X)}{12}m>0
\]
for $m\geq 1$. In this case, it is well-known that the strictly nef divisor 
$V|_V=-K_X|_V$ is ample (for example, see the proof of \cite{liu}*{Lemma 2.1} and the references therein).
In particular, $(-K_X)^4=(-K_X)^3\cdot V=(-K_X|_V)^3>0$, which implies that $-K_X$ is ample on $X$. \qedhere
\end{step}
\setcounter{step}{0}
\end{proof}

The ampleness part of Theorem \ref{thm.main} follows from Theorem \ref{thm.lv}:

\begin{cor}\label{cor.main}
Let $X$ be a smooth projective  rationally connected  fourfold such that $c^2_1(X)\cdot c_2(X)\neq 0$ 
and the anti-canonical divisor $-K_X$ is strictly nef.
If $\kappa(X, -K_X)\geq 0$, then $-K_X$ is ample. 
\end{cor}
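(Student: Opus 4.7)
The plan is to reduce Corollary \ref{cor.main} directly to Theorem \ref{thm.lv}, by exploiting the hypothesis $\kappa(-K_X)\geq 0$ to produce prime divisors to which that theorem applies. Since the deep work (strict nefness $+$ non-triviality of $c_1^2\cdot c_2$ $\Rightarrow$ ampleness of $V-mK_X$) is already packaged in Theorem \ref{thm.lv}, the remaining task is just a linear-combination argument.

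First, I would use $\kappa(-K_X)\geq 0$ to pick a positive integer $m_0$ and an effective Cartier divisor $D$ with $D\sim -m_0K_X$. Write the decomposition into prime components as $D=\sum_{i=1}^{r} a_i V_i$ with $a_i\in\mathbb{Z}_{>0}$. Each $V_i$ is a nonzero prime divisor, so Theorem \ref{thm.lv} applies: for each $i$ there is an integer $m_i$ such that $V_i-mK_X$ is ample whenever $m\geq m_i$. Setting $m_*:=\max_i m_i$, the divisors $V_1-m_*K_X,\dots,V_r-m_*K_X$ are all ample simultaneously.

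Next, I would form the positive integral combination
\[
\sum_{i=1}^{r} a_i (V_i-m_*K_X) \;=\; D - m_*\bigl(\textstyle\sum_i a_i\bigr)K_X \;\sim\; -\bigl(m_0+m_*\textstyle\sum_i a_i\bigr)K_X.
\]
The left-hand side is a sum of ample divisors with positive integer coefficients, hence ample. Consequently $-NK_X$ is ample for $N:=m_0+m_*\sum_i a_i>0$, and ampleness of a positive multiple implies $-K_X$ itself is ample. This finishes the proof of the corollary.

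There is no real obstacle once Theorem \ref{thm.lv} is in hand; the only small point of care is to choose a single threshold $m_*$ that works uniformly for all finitely many prime components of $D$, so that the ample divisors can be combined honestly. The whole content of the implication $\kappa(-K_X)\geq 0\Rightarrow -K_X$ ample therefore rests on the weak Campana--Peternell statement already established in Theorem \ref{thm.lv}.
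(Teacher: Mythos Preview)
Your proposal is correct and matches the paper's approach: the paper simply states that Corollary~\ref{cor.main} ``follows immediately'' from Theorem~\ref{thm.lv} (this is the easy direction of the equivalence Conjecture~\ref{conj.cp.eq} $\Leftrightarrow$ Conjecture~\ref{conj.weak.cp} $+$ Conjecture~\ref{conj.nonvanishing} already flagged as ``obvious'' in Section~\ref{sec2}), and your linear-combination argument is exactly the standard way to unpack this. The only point you leave implicit is that the effective divisor $D\sim -m_0K_X$ is nonzero (so that $r\geq 1$), but this is automatic since strict nefness of $-K_X$ forces $-m_0K_X\not\equiv 0$.
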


\begin{proof}
Since $\kappa(X, -K_X)\geq 0$, there exists a positive integer $m$ such that
$-mK_X=\sum a_iV_i$, where $a_i>0$ and $V_i$ is prime for each $i$.
Applying Theorem \ref{thm.lv} to each $V_i$, we obtain that 
$V_i-m_iK_X$ is ample for $m_i\gg 1$. 
Therefore, 
\[
-mK_X-\sum a_im_iK_X=\sum a_i(V_i-m_iK_X)
\]
is ample for $m_i\gg 1$ and all $i$. It follows that $-K_X$ is ample.
\end{proof}

The next corollary of Theorem \ref{thm.lv} is very similar to \cite{liu-matsumura}*{Lemma 4.5}.
It plays the same role in Section \ref{sec4} as \cite{liu-matsumura}*{Lemma 4.5} in 
\cite{liu-matsumura}*{Section 4}.

\begin{cor}\label{cor.pseff}
Let $X$ be a smooth projective  rationally connected  fourfold such that
the anti-canonical divisor $-K_X$ is strictly nef.
If $\kappa(X, -K_X)=-\infty$,  
then the divisor $-K_X-D$ is not pseudoeffective for any nonzero effective $\mathbb R$-divisor $D$ on $X$.
\end{cor}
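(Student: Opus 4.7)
The plan is to argue by contradiction, invoking Theorem \ref{thm.lv} as the main input. Suppose, toward a contradiction, that there exists a nonzero effective $\mathbb R$-divisor $D$ on $X$ with $-K_X-D$ pseudoeffective. The goal is to show that this forces some positive multiple of $-K_X$ to be big, which immediately contradicts the hypothesis $\kappa(-K_X)=-\infty$.

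To carry this out, I would first write $D=\sum_{i=1}^{k}a_iV_i$ with $V_i$ distinct prime divisors and real coefficients $a_i>0$. For each $i$, Theorem \ref{thm.lv} provides an integer $m_i$ such that $V_i-mK_X$ is ample for every $m\geq m_i$. Choosing any integer $M\geq\max_i m_i$, each divisor $V_i-MK_X$ is simultaneously ample. Setting $N:=M\sum_i a_i$, the identity
\[
D-NK_X=\sum_{i=1}^{k}a_i\bigl(V_i-MK_X\bigr)
\]
exhibits $D-NK_X$ as a positive $\mathbb R$-linear combination of ample $\mathbb R$-divisors, hence itself ample.

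Finally, adding the pseudoeffective class $-K_X-D$ and the ample class $D-NK_X$ yields
\[
-(N+1)K_X=(-K_X-D)+(D-NK_X),
\]
which is pseudoeffective plus ample, and therefore big. Consequently $-K_X$ is big, so $h^0(X,-mK_X)>0$ for some $m\geq 1$, contradicting $\kappa(-K_X)=-\infty$. Because Theorem \ref{thm.lv} already does essentially all of the work, there is no real obstacle here; the role of this corollary is simply to package the ampleness statement of Theorem \ref{thm.lv} into the precise numerical form needed in Section \ref{sec4}.
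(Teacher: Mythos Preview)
Your proof is correct, and in fact cleaner than the paper's. Both arguments begin identically: assume $E:=-K_X-D$ is pseudoeffective, and use Theorem~\ref{thm.lv} (applied componentwise, exactly as you spell out) to produce an ample $\mathbb{R}$-divisor $H:=D-NK_X$. At this point you observe directly that
\[
-(N+1)K_X = E + H
\]
is pseudoeffective plus ample, hence big, contradicting $\kappa(-K_X)=-\infty$. The paper instead keeps the same decomposition but, rather than invoking bigness, runs a chain of intersection computations: from $(-K_X)^4=0$ it successively extracts $(-K_X)^3\cdot H=0$, $(-K_X)^2\cdot H^2=0$, and finally $(-K_X)\cdot H^3=0$, which contradicts the strict nefness of $-K_X$ since $H^3$ is represented by an effective curve. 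Your route is shorter and uses only the soft fact ``ample $+$ pseudoeffective $=$ big''; the paper's route has the minor advantage of pinpointing the contradiction at the level of strict nefness rather than Kodaira dimension, but this is not needed anywhere later.
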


\begin{proof}
Assume that there exists a nonzero effective $\mathbb R$-divisor $D$ 
such that $E:=-K_X-D$ is  pseudoeffective. 
Since $\kappa(X, -K_X)=-\infty$, $-K_X$ cannot be linearly equivalent to any prime Calabi--Yau divisor.
Then by Theorem \ref{thm.lv} and 
the same proof of Corollary \ref{cor.main}, 
the divisor $H:=D-mK_X$ is ample for $m\gg 1$. 
Since $E$ is pseudoeffective and $H$ is ample, 
we have that 
\begin{equation*}
\begin{split}
&(-K_X)^3\cdot E\geq 0, \quad (-K_X)^2\cdot  H \cdot E\geq 0, \quad  (-K_X)\cdot  H^2 \cdot  E\geq 0,
\\ 
&(-K_X)^3 \cdot H\geq 0, \quad  (-K_X)^2 \cdot  H^2\geq 0, \quad  (-K_X)\cdot  H^3\geq 0.
\end{split}
\end{equation*}
Since $\kappa(X,-K_X)=-\infty$, we obtain that $(-K_X)^4=0$.
From $0=(m+1) (-K_X)^4=(-K_X)^3 \cdot (E+H)\geq 0$, we obtain $(-K_X)^3 \cdot E=(-K_X)^3 \cdot H=0$. 
Then, we obtain $0=(m+1)(-K_X)^3 \cdot H=(-K_X)^2\cdot(E+H)\cdot H\geq 0$,  
which implies that $(-K_X)^2\cdot H \cdot E=(-K_X)^2 \cdot H^2=0$.
Similarly, 
from $$
0=(m+1)(-K_X)^2 \cdot H^2=(-K_X)\cdot (E+H) \cdot H^2=(-K_X)\cdot H^2 \cdot E+(-K_X)\cdot H^3\geq 0,
$$
we finally obtain $(-K_X)\cdot H^2 \cdot E=(-K_X) \cdot H^3=0$.
However, a multiple of the numerical class $H^3$
can be represented by an effective curve since $H$ is ample, 
and thus $(-K_X) \cdot H^3=0$ contradicts the strict nefness of $-K_X$. 
\end{proof}

\section{Non-vanishing for strictly nef anti-canonical divisors}\label{sec4}

In this section, we prove the non-vanishing part of Theorem \ref{thm.main}.

\begin{thm}[Theorems \ref{thm.nu=3} and \ref{thm.nu=2}]\label{thm.nonvanishing}
Let $X$ be a projective rationally connected smooth fourfold such that
the anti-canonical divisor $-K_X$ is strictly nef.
Then, we have that $\kappa(X, -K_X)\geq 0$.
\end{thm}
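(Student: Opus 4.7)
The plan is to prove $\kappa(-K_X)\geq 0$ by a case analysis on the numerical dimension $\nu:=\nu(-K_X)$. Combining the hypothesis $c_1^2(X)\cdot c_2(X)\neq 0$ with the non-negativity in Theorem~\ref{thm.ou} gives $c_1^2(X)\cdot c_2(X)>0$ and, in particular, rules out $\nu\leq 1$: if $(-K_X)^2$ were numerically trivial as a codimension-two class, the intersection $(-K_X)^2\cdot c_2(X)$ would vanish. The case $\nu=4$ is immediate, since then $-K_X$ is nef and big, whence $\kappa(-K_X)=4$. Thus the substance is contained in the two remaining cases $\nu=3$ (Theorem~\ref{thm.nu=3}) and $\nu=2$ (Theorem~\ref{thm.nu=2}).

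In both remaining cases $(-K_X)^4=0$, so Corollary~\ref{cor.rr} and~\eqref{eq.rr} give the quadratic lower bound
\[
\chi(X,-mK_X)=\frac{c_1^2(X)\cdot c_2(X)}{24}(m^2+m)+1,
\]
which tends to infinity with $m$. The strict nefness of $-K_X$ forbids any effective multiple of $K_X$, so Serre duality forces $h^4(X,-mK_X)=h^0(X,(m+1)K_X)=0$ for $m\geq 0$. Hence the core task is to bound $h^1$, $h^2$, $h^3$ of $-mK_X$ tightly enough that the Euler characteristic growth forces $h^0(X,-mK_X)\geq 1$ for some $m$. My strategy throughout is to argue by contradiction: assume $\kappa(-K_X)=-\infty$ and invoke Corollary~\ref{cor.pseff}, which forbids $-K_X-D$ from being pseudo-effective for any nonzero effective $\mathbb{R}$-divisor $D$; the aim is to construct precisely such a $D$.

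For $\nu=3$ the positivity of $-K_X$ is \emph{near-maximal}. My plan is to restrict to a general member $Y$ of a sufficiently ample linear system and use the short exact sequence
\[
0\to\mathcal O_X(-mK_X-Y)\to\mathcal O_X(-mK_X)\to\mathcal O_Y(-mK_X|_Y)\to 0,
\]
combined with Lemma~\ref{lem.firststep}, which upgrades the strict nefness of $-K_X$ into strict nefness (indeed ampleness, once a big twist appears) of the kernel term for appropriate $m$. Coupling this with the quadratic growth of $\chi$ should transfer the non-vanishing problem to the threefold $Y$, where $(-K_X)|_Y$ remains strictly nef and where the positivity of $(-K_X)^3$ on $X$ translates into sufficient positivity of $(-K_X|_Y)^2$ to close the induction via Riemann--Roch on $Y$. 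An alternative angle is a generalized Kawamata--Viehweg-type vanishing exploiting $\nu=3$ directly to kill $h^2(-mK_X)$ and $h^3(-mK_X)$, after which $\chi>0$ forces $h^0>0$.

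The case $\nu=2$ is the main obstacle, because $-K_X$ is only minimally positive and elementary Riemann--Roch arguments cannot succeed alone; this is where Theorem~\ref{thm.pseff} becomes essential. The plan is to examine the Harder--Narasimhan filtration of $\mathcal T_X$ with respect to a suitable polarization and apply Theorem~\ref{thm.pseff} to the successive quotients: each has pseudo-effective first Chern class. The strict positivity $c_1^2(X)\cdot c_2(X)>0$ then constrains the Bogomolov discriminants of the semistable graded pieces. Balancing these constraints against the strict nefness of $-K_X$ should yield an effective $\mathbb{R}$-divisor $D\not\equiv 0$ with $-K_X-D$ pseudo-effective, contradicting Corollary~\ref{cor.pseff} unless $\kappa(-K_X)\geq 0$. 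The delicate step is precisely this conversion of the codimension-two numerical invariant $c_1^2\cdot c_2$ into a genuine divisorial statement via the sheaf-theoretic positivity of the tangent bundle, and that is where the argument is expected to be most technical.
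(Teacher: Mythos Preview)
Your case split and the observation that $c_1^2(X)\cdot c_2(X)\neq 0$ already excludes $\nu(-K_X)\leq 1$ are fine (the paper does not bother with this shortcut and simply treats $\nu\leq 2$ together). Your sketch for $\nu=3$ is a bit muddled---the kernel of your displayed sequence is $\mathcal O_X(-mK_X-Y)$, and Lemma~\ref{lem.firststep} says nothing about that line bundle---but the ``alternative angle'' you mention (Kawamata--Viehweg-type vanishing to kill $h^i(-mK_X)$ for $i\geq 2$) is exactly what the paper does: take $H$ very ample, use the sequence $0\to\mathcal O_X(-mK_X)\to\mathcal O_X(H-mK_X)\to\mathcal O_H(H-mK_X)\to 0$, and apply Kodaira on $X$ and Kawamata--Viehweg on $H$ (where $-K_X|_H$ is nef and big). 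So this case is essentially recoverable.

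The real problem is your plan for $\nu=2$. The Harder--Narasimhan filtration of $\mathcal T_X$ gives torsion-free quotients whose first Chern classes are pseudo-effective by Theorem~\ref{thm.pseff}, but these classes \emph{sum to} $c_1(X)=-K_X$; they do not present $-K_X$ as (pseudo-effective)$+$(nonzero effective), which is what you need to invoke Corollary~\ref{cor.pseff}. Bogomolov-type inequalities constrain $c_2$ of the graded pieces but do not, by themselves, manufacture an effective divisor $D$ with $-K_X-D$ pseudo-effective; your proposal acknowledges this step is ``expected to be most technical'' but gives no mechanism for it, and I do not see one. The paper proceeds entirely differently and analytically: it takes a singular metric $h$ on $-K_X$ with $\sqrt{-1}\Theta_h\geq 0$ and looks at the Siu decomposition. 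If a divisorial term $\lambda_i[D_i]$ appears, that \emph{is} the forbidden $D$ for Corollary~\ref{cor.pseff}. If not, one passes to the Bedford--Taylor product $(-K_X)^2$, and strict nefness together with $\nu\leq 2$ forces all codimension-$2$ Lelong loci to vanish; hence the multiplier-ideal subschemes $V(\mathcal I(h^{\otimes m}))$ have dimension $\leq 1$. Then the hard Lefschetz theorem of Demailly--Peternell--Schneider, combined with the vanishing $H^0(X,\Omega^2_X(-mK_X))=0$ (this is where Theorem~\ref{thm.pseff} actually enters, via a saturated sub-line-bundle argument), kills $H^2(X,\mathcal O_X((1-m)K_X))$, and Riemann--Roch finishes. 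Your outline is missing this entire analytic apparatus, and the algebraic substitute you propose does not supply the crucial effective divisor.
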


If $\nu(X, -K_X)=4$, then the anti-canonical divisor $-K_X$ is big.
By the basepoint-free theorem, $-K_X$ is semiample. That is,
there exists a morphism $f\colon X \to Y$ and a $\mathbb Q$-Cartier ample divisor $H$ on $Y$ such that $-K_X=f^*H$.
Since $-K_X$ is strictly nef, $f$ has to be finite. Hence,
$-K_X=f^*H$ is ample.
If $\nu(X, -K_X)=0$, then $-K_X\cdot D^3=0$ for any very ample divisor $D$ on $X$ by definition. 
However, a multiple of the numerical class $D^3$ is represented by an effective curve,
hence $-K_X\cdot D^3>0$ by the strict nefness of $-K_X$, which is impossible.
Therefore, it suffices to consider $\nu(X, -K_X)=1, 2, 3$. 

\subsection{The case $\nu(X, -K_X)=3$}
In this case, we have the non-vanishing result for nef anti-canonical divisors by the standard arguments,
see \cite{lp17}*{Lemma 2.1} and the references therein. We sketch the proof for the reader's convenience.

\begin{thm}\label{thm.nu=3}
Let $X$ be a projective rationally connected smooth fourfold such that
the anti-canonical divisor $-K_X$ is nef.
If $\nu(X,-K_X)=3$, then $\kappa(X,-K_X)\geq 0$.
\end{thm}

\begin{proof}
Let $H$ be a very ample smooth hypersurface on $X$. Then, $-K_X|_H$ is nef and big on $H$ 
(see \cite{liu-svaldi}*{Lemma 2.1}). 
Let us consider the long cohomology sequence  induced by the exact sequence:
\[
0\to \mathcal O_X(-mK_X) \to  \mathcal O_X(H-mK_X)\to \mathcal O_H(H-mK_X)\to 0.
\]
By the adjunction formula, we have that 
\[
(H-mK_X)|_H=(K_X+H-(m+1)K_X)|_H=K_H+(m+1)(-K_X)|_H.
\] 
Therefore, it follows from 
the Kawamata--Viehweg vanishing theorem that
\[
H^i(H, \mathcal O_H(H-mK_X))=H^i(H, \mathcal O_H(K_H+(m+1)(-K_X)|_H))=0
\]
for $i\geq 1$ and $m\geq 0$. On the other hand, since $H+m(-K_X)$ is ample for any $m\geq 0$,
we have 
\[
H^i(X, \mathcal O_X(H-mK_X))=H^i(X, \mathcal O_X(K_X+H+(m+1)(-K_X)))=0
\] 
for any $i\geq 1$ and $m\geq 0$ by Kodaira's vanishing theorem.
Therefore, we obtain that 
$H^i(X, \mathcal O_X(-mK_X))=0$ holds for $i\geq 2$ and  $m\geq 0$.
Note that  $\chi(X, -mK_X)\geq 1$ for $m\geq 0$ by Lemma \ref{lem.rr}.
Hence, 
\[
h^0(X, \mathcal O_X(-mK_X))=h^1(X, \mathcal O_X(-mK_X))+\chi(X, -mK_X)\geq 1,
\]
which implies that $\kappa(X,-K_X)\geq 0$.
\end{proof}

\subsection{The case $\nu(X, -K_X)\leq 2$} In this case, we follow closely the arguments in \cite{liu-matsumura}*{Subsection 4.2}, which is based on the strategy of \cites{lop, lp} and the references therein.

\begin{lem}\label{lem.van.1}
Let $X$ be a projective rationally connected smooth fourfold such that
the anti-canonical divisor $-K_X$ is strictly nef.
If  $\kappa(X,-K_X)=-\infty$, 
then $H^0(X, \Omega^q_X(-mK_X))=0$ for $0< q< n$ and $m\gg 1$.
\end{lem}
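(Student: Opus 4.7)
The plan is to argue by contradiction. Suppose that for some $0 < q < n = 4$ and some $m \gg 1$ there is a nonzero section $s \in H^0(X, \Omega^q_X(-mK_X))$. First I would translate $s$ into a saturated line subsheaf of $\Omega^q_X$: the section is equivalent to a nonzero (hence injective, since $\Omega^q_X$ is torsion free) morphism $\mathcal{O}_X(mK_X) \to \Omega^q_X$, and saturating the image inside $\Omega^q_X$ (using that $X$ is smooth, so reflexive rank-one sheaves are invertible) yields a saturated line subsheaf $\mathcal{L} = \mathcal{O}_X(mK_X + D) \hookrightarrow \Omega^q_X$ for some effective Cartier divisor $D \geq 0$.

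The next step is to bring in Ou's positivity theorem. Composing with the antisymmetrization embedding $\Omega^q_X \hookrightarrow (\Omega^1_X)^{\otimes q}$ and, if necessary, enlarging $D$ by an effective divisor $D' \geq 0$ so as to saturate inside $(\Omega^1_X)^{\otimes q}$, I obtain a saturated line subsheaf $\widetilde{\mathcal{L}} = \mathcal{O}_X(mK_X + D + D') \hookrightarrow (\Omega^1_X)^{\otimes q}$. Dualizing this inclusion produces a rank-one torsion-free quotient $\mathcal{Q}$ of $(T_X)^{\otimes q}$ whose first Chern class, as a divisor class, equals $-(mK_X + D + D')$. Theorem \ref{thm.pseff} then forces $-mK_X - D - D'$ to be pseudo-effective.

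Rescaling by $1/m$, the $\mathbb{R}$-divisor $-K_X - \tfrac{1}{m}(D + D')$ is pseudo-effective. Under the hypothesis $\kappa(-K_X) = -\infty$, Corollary \ref{cor.pseff} forbids $-K_X - A$ from being pseudo-effective for any nonzero effective $\mathbb{R}$-divisor $A$. Taking $A = \tfrac{1}{m}(D + D')$ this immediately yields the desired contradiction whenever $D + D' \neq 0$.

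The main obstacle I expect is the residual case $D = D' = 0$, in which $\mathcal{O}_X(mK_X)$ is itself already saturated in $(\Omega^1_X)^{\otimes q}$. To rule this out for arbitrarily large $m$, I would combine the Bogomolov--Sommese bound $\kappa(\mathcal{L}) \leq q$ for a saturated invertible subsheaf $\mathcal{L} \subset \Omega^q_X$ (automatic on the rationally connected $X$ but constraining the associated foliation) with strict nefness of $-K_X$: the dual quotient $(T_X)^{\otimes q} \twoheadrightarrow \mathcal{O}_X(-mK_X)$ (surjective outside codimension two) together with an intersection against appropriate nef classes and boundedness of the family of such saturated subsheaves should furnish the missing numerical contradiction, in the spirit of \cite{liu-matsumura}*{Subsection 4.2}. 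Once this last case is eliminated, the lemma follows; all other cases reduce to routine invocations of Ou's theorem and Corollary \ref{cor.pseff}.
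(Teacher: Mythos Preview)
Your argument has a genuine gap in the residual case $D+D'=0$, and the fixes you sketch (Bogomolov--Sommese, boundedness of saturated subsheaves) do not close it. Bogomolov--Sommese only gives $\kappa(\mathcal{O}_X(mK_X))\le q$, which is vacuous since $\kappa(K_X)=-\infty$ on a rationally connected manifold; and ``boundedness'' of saturated line subsheaves says nothing here because the first Chern classes $mK_X$ are unbounded as $m$ varies. More fundamentally, your contradiction hypothesis is misstated: the negation of ``$H^0=0$ for $m\gg1$'' is that $H^0(X,\Omega^q_X(-mK_X))\ne0$ for \emph{infinitely many} $m$ (after pigeonholing on $q$), not just for a single large $m$. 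With only one $m$ you cannot exclude $D+D'=0$: all you would know is that $-mK_X$ is pseudoeffective, which is automatic since $-K_X$ is nef.

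The paper exploits exactly this ``infinitely many $m$'' to dispose of the bad case. Following \cite{lop}*{Proposition 3.4} and \cite{lp}*{Lemma 4.1}, from nonvanishing for infinitely many $m$ one extracts a \emph{single fixed} saturated line subsheaf $\mathcal{O}_X(-F)\hookrightarrow\bigwedge^{r}\Omega^q_X$ (take $r$ the generic rank of the subsheaf generated by all the images and pass to the determinant) such that $H^0(X,\mathcal{O}_X(-F-m'K_X))\ne0$ for infinitely many $m'$. Ou's theorem makes $F$ pseudoeffective, so $-m'K_X\sim E_{m'}+F$ with $E_{m'}\ge0$; Corollary~\ref{cor.pseff} then forces $E_{m'}=0$ for every such $m'$. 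Now the same $F$ is linearly equivalent to $-m'_1K_X$ and $-m'_2K_X$ for two different $m'_1,m'_2$, whence $K_X$ is torsion, contradicting strict nefness. The crucial idea you are missing is to pin down one $F$ independent of $m$ and then compare two values of $m$, rather than to try to rule out $D+D'=0$ for a single $m$.
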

\begin{proof}
The proof is very similar to \cite{lop}*{Proposition 3.4}.
If there exists an integer $q$ such that 
$H^0(X, \Omega^q_X(-mK_X))\neq 0$ for infinitely many positive integers $m$,
then as in \cite{lop}*{Proposition 3.4} or \cite{lp}*{Lemma 4.1},
there exists a positive integer $r$ and a Cartier divisor $F$ such that
$\mathcal O_X(-F)$ is a subsheaf saturated in $\bigwedge^r \Omega^q_X$ and
\[
H^0(X, \mathcal O_X(-F-m'K_X))\neq 0
\]
for infinitely many positive integers $m'$. 
Therefore, there exists an effective divisor $E_{m'}$ such that
\[
-m'K_X\sim E_{m'}+F
\] for infinitely many positive integers $m'$.
By the dual of the quotient in Theorem \ref{thm.pseff}, 
we have that $F$ is pseudoeffective.
Then, Corollary \ref{cor.pseff} implies that $E_{m'}=0$ for all $m'$.
In particular, $-m'_1K_X\sim F\sim -m'_2K_X$ for two different integers $m'_1$ and $m'_2$,
which implies that $K_X$ is trivial.
This contradicts both that $\kappa(X, -K_X)=-\infty$ and that $-K_X$ is strictly nef.
\end{proof}

\begin{rem}\label{rem.analytic}
In the rest of this paper, 
we use the notation of the analytic methods in \cite{demailly-book} 
and interchangeably use the terms ``Cartier divisors'', ``invertible sheaves'', and ``line bundles''. 
For the compatibility of intersection of positive currents 
(if it is well-defined in the sense of \cite{dem91}*{Corollary 3.3 or Theorem 3.5}) 
and intersection of the corresponding classes in coholomogy, see \cite{dem91}*{Corollary 10.2} (see also \cite{boucksom}*{Subsection 2.6} for a nice survey).  
In particular, 
the intersecting of the class of a positive $(p,p)$-current and $n-p$ nef 
classes (viewing as limits of classes of K\"ahler forms) in coholomogy is non-negative.
\end{rem}

\begin{lem}\label{lem.partial.amp}
Let $X$ be a projective rationally connected smooth fourfold such that 
the anti-canonical divisor $-K_X$ is strictly nef.
Assume that $-K_X$ admits a singular Hermitian metric $h$ with positive curvature current 
such that the closed subschemes $V_m$ defined by multiplier ideal sheaves  $\mathcal I(h^{\otimes m})$ 
are of dimension $ \leq 1$ for any  $m \gg 1$. 
Then, we have  $\kappa(X,-K_X)\geq 0$. 
\end{lem}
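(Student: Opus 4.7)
The plan is to produce a nonzero section of $\mathcal O_X(-mK_X)$ for some $m\gg 1$ by combining a Nadel-type vanishing with the Euler characteristic estimate of Corollary \ref{cor.rr}. That corollary already gives $\chi(X,-mK_X)\geq 1$ for every $m\geq 0$, so it is enough to prove that $H^i(X,\mathcal O_X(-mK_X))=0$ for $i=2,3,4$ and some $m\gg 1$; then
\begin{equation*}
h^0(X,-mK_X)\geq \chi(X,-mK_X)+h^1(X,-mK_X)\geq 1.
\end{equation*}

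Top-degree vanishing is immediate from Serre duality: $H^4(X,-mK_X)\cong H^0(X,(m+1)K_X)^{\vee}=0$, since the strict nefness of $-K_X$ forces $(m+1)K_X$ to have strictly negative degree on every curve and hence no nonzero global section. For $i=2,3$, the driving tool is the short exact sequence
\begin{equation*}
0\to \mathcal O_X(-mK_X)\otimes\mathcal I(h^{\otimes m}) \to \mathcal O_X(-mK_X) \to \mathcal O_{V_m}(-mK_X)\to 0.
\end{equation*}
The hypothesis $\dim V_m\leq 1$ gives $H^i(V_m,\mathcal O_{V_m}(-mK_X))=0$ for every $i\geq 2$, so the associated long exact sequence reduces the desired vanishing to
\[
H^i\bigl(X,\mathcal O_X(-mK_X)\otimes\mathcal I(h^{\otimes m})\bigr)=0\qquad\text{for } i\geq 2.
\]

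To establish this last statement I would rewrite $-mK_X=K_X+\bigl(-(m+1)K_X\bigr)$, equip the nef line bundle $-(m+1)K_X$ with the singular Hermitian metric $h^{\otimes(m+1)}$ of positive curvature current, and invoke a Nadel-type vanishing for nef (not necessarily big) line bundles in the spirit of Cao and Fujino--Matsumura, asserting $H^i(X,K_X+L\otimes\mathcal I(h))=0$ whenever $i>\dim X-\nu(L)$. Together with the inclusion $\mathcal I(h^{\otimes(m+1)})\subset\mathcal I(h^{\otimes m})$, which preserves the dimension bound on the associated Nadel subscheme for $m\gg 1$, this delivers the vanishing in the required range as soon as $\nu(-K_X)\geq 2$, and the proof concludes via Corollary \ref{cor.rr}.

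The main obstacle is the boundary case $\nu(-K_X)=1$, in which the Fujino--Matsumura range of indices only recovers $H^4=0$, i.e.\ nothing beyond what Serre duality has already supplied. To close this gap one has to squeeze more out of the condition $\dim V_m\leq 1$, for example by directly estimating $H^1(V_m,\mathcal O_{V_m}(-mK_X))$ and $H^2(X,\mathcal O_X(-mK_X)\otimes\mathcal I(h^{\otimes m}))$, and by exploiting rational connectedness of $X$ (ensuring $\chi(\mathcal O_X)=1$) together with the hypothesis $c_1^2(X)\cdot c_2(X)\neq 0$ through the Riemann--Roch formula \eqref{eq.rr}; this is the technical heart of the lemma and the only spot where the dichotomy in $\nu(-K_X)$ becomes delicate.
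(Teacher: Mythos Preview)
Your reduction to $H^2(X,\mathcal O_X(-mK_X))=H^4(X,\mathcal O_X(-mK_X))=0$ is correct and matches the paper's framework, but the mechanism you propose for killing $H^2$ does not work in the range where the lemma is actually applied. A Nadel--Cao type theorem gives $H^q(X,K_X\otimes L\otimes\mathcal I(h))=0$ only for $q>n-\nu(L)$; with $n=4$ and $L=-(m+1)K_X$ this kills $H^2$ only when $\nu(-K_X)\geq 3$, not $\geq 2$ as you write. Since the lemma is invoked in Theorem~\ref{thm.nu=2} precisely under the hypothesis $\nu(-K_X)\leq 2$, your argument covers none of the relevant cases; your final paragraph concedes this for $\nu=1$ without offering an actual argument, and by the count above the case $\nu=2$ is equally open.

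The paper's proof uses a different device that is insensitive to $\nu$. Arguing by contradiction with $\kappa(-K_X)=-\infty$, it applies the hard Lefschetz theorem of Demailly--Peternell--Schneider \cite{dps}, which for any pseudo-effective $(L,h)$ on a compact K\"ahler manifold furnishes a surjection
\[
H^0\bigl(X,\Omega_X^{n-q}\otimes L\otimes\mathcal I(h)\bigr)\twoheadrightarrow H^q\bigl(X,K_X\otimes L\otimes\mathcal I(h)\bigr).
\]
With $q=2$, $L=\mathcal O_X(-mK_X)$ and metric $h^{\otimes m}$, the source lies inside $H^0(X,\Omega_X^2(-mK_X))$, and this group vanishes for $m\gg1$ by Lemma~\ref{lem.van.1}; that lemma is exactly where the hypotheses $c_1^2(X)\cdot c_2(X)\neq 0$ and $\kappa(-K_X)=-\infty$ are consumed, via Corollary~\ref{cor.pseff} and Theorem~\ref{thm.pseff}. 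One obtains $H^2(X,\mathcal O_X(K_X-mK_X)\otimes\mathcal I(h^{\otimes m}))=0$ for every $m\gg1$, and the passage to $H^2(X,\mathcal O_X(K_X-mK_X))=0$ via the short exact sequence on $V_m$ then proceeds just as you outlined. The idea missing from your proposal is therefore the hard Lefschetz surjection paired with the vanishing of twisted holomorphic $2$-forms, which replaces the numerical-dimension input you were trying to use.
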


\begin{proof}
The proof is based on the strategy of \cite{lop}*{Section 3}.
For a contradiction, we assume that $\kappa(X, -K_X)=-\infty$. 
By the hard Lefschetz theorem \cite{dps}*{Theorem 0.1},
the morphism
\[
H^0(X, \Omega^2_X(-mK_X)\otimes \mathcal I(h^{\otimes m}))
\twoheadrightarrow H^2(X, \strutt_X(K_X-mK_X)\otimes \mathcal I(h^{\otimes m}))
\]
is surjective. 
By Lemma \ref{lem.van.1}, we have that $H^0(X, \Omega^2_X(-mK_X))=0$ for any $m \gg 1$.
It follows that $H^0(X, \Omega^2_X(-mK_X)\otimes \mathcal I(h^{\otimes m}))
=H^2(X, \strutt_X(K_X-mK_X)\otimes \mathcal I(h^{\otimes m}))=0$ 
for any $m \gg 1$. 
On the other hand, we have that
$H^2(V_m, K_X-mK_X)=0$ for any $m \gg 1$ by the dimensional reason. 
Then, from the long cohomology sequence induced by the exact sequence
\[
0\to \strutt_X(K_X-mK_X)\otimes \mathcal I(h^{\otimes m}) \to  \strutt_X(K_X-mK_X) \to \strutt_{V_m}(K_X-mK_X)\to 0,
\]
we can see that $H^2(X,  \strutt_X(K_X-mK_X))=0$ for any $m \gg 1$.
Note that by Serre's duality, we have that $H^4(X,  \strutt_X(K_X-mK_X))=H^0(X,  \strutt_X(mK_X))=0$.
Note also that $\chi(X, K_X-mK_X)\geq 1$ for $m\geq 1$ by Lemma \ref{lem.rr}.
This indicates that 
\[
h^0(X, \mathcal O_X(-(m-1)K_X))\geq \chi(X, K_X-mK_X)\geq 1
\]
for any $m \gg 1$, contradicting the assumption $\kappa(X,-K_X)=-\infty$.
\end{proof}

Then, the proof of \cite{liu-matsumura}*{Theorem 4.7} works almost verbatim 
after replacing \cite{liu-matsumura}*{Lemma 4.5} by Corollary \ref{cor.pseff}
and \cite{liu-matsumura}*{Lemma 4.6} by Lemma \ref{lem.partial.amp}.
We sketch the proof here for the reader's convenience.

\begin{thm}\label{thm.nu=2}
Let $X$ be a projective rationally connected smooth fourfold such that  
 the anti-canonical divisor $-K_X$ is strictly nef.
If $\nu(X,-K_X)\leq 2$, then $\kappa(X,-K_X)\geq 0$.
\end{thm}

\begin{proof}
For a contradiction, we assume that $\kappa(X, -K_X)=-\infty$. 
Let $h$ be a singular Hermitian metric on $-K_X$
such that the curvature current $\sqrt{-1}\Theta_{h}(-K_X)$ is positive. 
Consider the Siu decomposition of $\sqrt{-1}\Theta_{h}(-K_X)$: 
\[
\sqrt{-1}\Theta_{h}(-K_X) = R + \sum_i\lambda_i[D_i],
\]
where  $[D_i]$ is a current of integration over a prime divisor $D_i$
and the positive real number $\lambda_i$ is the generic Lelong number along $D_i$.
If $[D_i]$ is nonzero for some $i$, then
$-mK_X - D_{i}$ is pseudoeffective 
for $m \gg 1$,
contradicting Corollary \ref{cor.pseff}. 

Therefore, we assume that $\sum_i\lambda_i[D_i]=0$. In this case,
as in the proof of \cite{liu-matsumura}*{Theorem 4.7},
we can use the Bedford--Taylor product \cite{dem}*{Lemma 7.4} and
the approximation theorem \cite{dem}*{Main Theorem 1.1} 
to get a  Siu decomposition of $(-K_X)^{2}$:
\begin{align}\label{lelong}
(-K_X)^{2} \ni T + S \geq T  + \sum_{S_{i}}\nu(R,S_i)^{2} [S_{i}],
\end{align}
where $T, S$ are positive $(2,2)$-currents,
$\nu(R, S_i)$ is the Lelong number along
any subvariety $S_i \subset X$ of codimension 2, 
and the inequality in \eqref{lelong} follows from \cite{dem}*{(7.5)} and the properties of the Bedford--Taylor product.
Taking the wedge products of a K\"ahler form  $\{\omega\}$ and the numerical class $-K_X$ yields 
that 
$$
0=(-K_X)^{3} \cdot \{\omega\} 
\geq  \{T\} \cdot (-K_X)  \cdot  \{\omega\} + \{\sum_{S_{i}}\nu(R,S_i)^{2} [S_{i}] \}\cdot (-K_X)  
\cdot  \{\omega\},   
$$
where $\{\bullet \}$ denotes the the numerical class of $(p,p)$-currents. 
The equality on the left-hand side follows from the assumption $\nu(X, -K_X)\leq 2$. 
Then, since $-K_X$ is nef and $T$ is a positive $(2,2)$-current, 
each non-negative term (see Remark \ref{rem.analytic}) on the right-hand side  is zero. 
 In particular, we have that $(-K_X)|_{S_i}\equiv 0$ 
for any subvariety $S_i$ with $\nu(R, S_i) >0$. 
On the other hand, we can choose a curve $C$ on $S_i$ such that $-K_X\cdot C >0$ since $-K_X$ is strictly nef.
This is a contradiction,
and so the upper-level set of Lelong numbers 
\[
E_{c}(\sqrt{-1}\Theta_h(-K_X)):=\{x \in X \,|\, \nu(\sqrt{-1}\Theta_h(-K_X), x) \geq c\}
\] is a (possibly) 
countable union of Zariski closed subsets of codimension $\geq 3$ for any $c>0$. 
Then, by Skoda's lemma (see \cite{demailly-book}*{(5.6) Lemma} for example), we obtain that
\[
V_m\subset E_1(\mathcal I(h^{\otimes m}))=E_{\frac{1}{m}}(\sqrt{-1}\Theta_h(-K_X)),
\]
where $V_m$ is defined as in Lemma \ref{lem.partial.amp}.
It follows that $V_m$ is of dimension $\leq 1$ for any $m\geq 1$, 
and hence $\kappa(X,-K_X) \geq 0$ by Lemma \ref{lem.partial.amp},
which is a contradiction.
\end{proof}

\begin{rem}
In \cite{kollar-conj}*{Remark 3.6},
 Koll\'{a}r expected to give a structure theory for minimal threefolds
with $c_1\cdot c_2=0$,
 and find a universal lower bound $-c_1\cdot c_2\geq \epsilon >0$ for the case with $c_1\cdot c_2\neq 0$.
This expectation can be naturally generalized to any dimensions. 
Inspired by the works \cites{hs, ou} and the remaining case of 
Campana--Peternell's conjecture in dimension 4,
we propose the following natural question:
\begin{ques}\label{ques.main}
For projective  terminal varieties of dimension $n$ with nef anti-canonical divisors,
is there a structure theory for the case $c^{n-2}_1 \cdot c_2 =0$ and a universal lower bound $\epsilon$
depending only on $n$ such that $c^{n-2}_1 \cdot  c_2 \geq \epsilon >0$ for the case with $c_1\cdot c_2\neq 0$?
\end{ques}

This question is simple in the surface case; in dimension 3, the author and Chen Jiang 
provide a partial answer to this question in a recent note \cite{jiang-liu}.
We hope that a satisfactory structure theory for $c^2_1(X)\cdot  c_2(X)=0$ in dimension 4
can provide the last piece of the puzzle on Campana--Peternell's conjecture in dimension 4.
\end{rem}


\end{document}